\documentclass{article}

\usepackage{amsmath}
\usepackage{enumerate}
\usepackage{amssymb}
\usepackage{amsthm}
\usepackage{tikz-cd}
\usepackage{mathrsfs}
\usepackage{authblk}
\usepackage{tikz}
\usetikzlibrary{fit}

\newtheorem{thm}{Theorem}[section]

\newtheorem{lem}[thm]{Lemma}
\newtheorem{prop}[thm]{Proposition}
\theoremstyle{remark}
\newtheorem{rmk}[thm]{Remark}
\newtheorem*{clm}{Claim}

\theoremstyle{definition}

\newtheorem*{prob}{Problem}
\newtheorem*{conv}{Convention}

\DeclareMathOperator{\diff}{Diff}
\DeclareMathOperator{\Int}{Int}
\DeclareMathOperator{\sdiff}{SDiff}
\DeclareMathOperator{\im}{im}
\DeclareMathOperator{\id}{id}
\DeclareMathOperator{\vect}{Vect}
\DeclareMathOperator{\cat}{Cat}
\DeclareMathOperator{\pl}{PL}
\DeclareMathOperator{\rel}{\quad rel\ }

\bibliographystyle{plain}

\begin{document}

\title{On the classification of certain $1$-connected $7$-manifolds and related problems}
\author{Xueqi Wang}
\affil{Department of Basic Sciences, Beijing International Studies University, Beijing 100024, P. R. China \authorcr Email: wangxueqi@amss.ac.cn}
\date{}
\maketitle

\begin{abstract}
  We study the classification of closed, smooth, spin, $1$-connected $7$-manifolds whose integral cohomology ring is isomorphic to $H^*(\mathbb{C}P^2\times S^3)$. We also prove that if the integral cohomology ring of a closed, smooth, spin, $1$-connected $7$-manifold is isomorphic to $H^*(\mathbb{C}P^2\times S^3)$ or $H^*(S^2\times S^5)$, this $7$-manifold admits a Riemannian metric with positive Ricci curvature.

\noindent\textbf{Keywords: } $7$-manifold; surgery theory; positive Ricci curvature

\noindent\textbf{2010 MSC: }57R99, 55P15, 53C25
\end{abstract}

\section{Introduction}

It is an interesting and important task to give a classification of a certain kind of manifold. In this paper, we are mainly concerned with the following problem:

\begin{prob}
Classify closed, smooth, spin, $1$-connected $7$-manifolds whose integral cohomology ring is isomorphic to $H^*(\mathbb{C}P^2\times S^3)$.
\end{prob}

Examples of such manifolds are easy to find. For any integer $k$, let $M'_{k,0}$ be the total space of the nonspin $S^3$-bundle over $\mathbb{C}P^2$ with Euler class $e=0$ and the first Pontryagin class $p_1=4k+1$. (We can identify $p_1$ with an integer since, for any generator $z\in H^2(\mathbb{C}P^2)$, $z^2$ gives the same generator in $H^4(\mathbb{C}P^2)$.) It is easy to see that $M'_{k,0}$ satisfies our condition. Nonspin $S^3$-bundles over $\mathbb{C}P^2$ have attracted the attention of geometers, as it has been proved in \cite{GZ11} that their total space admits Riemannian metrics with nonnegative sectional curvature. Relevant work concerning $7$-manifolds of the same homology type is \cite{Kre18}, where the manifolds are required to have their integral cohomology ring isomorphic to $H^*(S^2\times S^5\# S^3\times S^4)$.

Our main classification result is the following:

\begin{thm}\label{thm:10}
  Let $M$ be a closed, smooth, spin, $1$-connected $7$-manifold with integral cohomology ring isomorphic to $H^*(\mathbb{C}P^2\times S^3)$. Then
  \begin{enumerate}
    \item there exists some $k$ such that $M$ is PL-homeomorphic to $M'_{k,0}$. Moreover, the homotopy type of $M$ is uniquely determined by $p_1(M) \mod 24$, and the homeomorphism type and the PL-homeomorphism type are uniquely determined by $p_1(M)$;
    \item there exists some homotopy $7$-sphere $\Sigma^7$ such that $M$ is diffeomorphic to $M'_{k,0}\# \Sigma^7$.
  \end{enumerate}
\end{thm}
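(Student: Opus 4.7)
My plan is Kreck's modified surgery with normal $2$-type $B = \mathbb{C}P^\infty \times B\mathrm{Spin}$, the natural choice for a spin manifold with $H^2 \cong \mathbb{Z}$. A generator $x \in H^2(M;\mathbb{Z})$ together with the spin structure makes $M$ a $B$-manifold, and the goal is to identify its $B$-bordism class in $\Omega_7^{\mathrm{Spin}}(\mathbb{C}P^\infty)$, eliminate surgery obstructions using $L_7(\mathbb{Z}) = 0$, and then promote the resulting bordism to an actual equivalence with some $M'_{k,0}$.

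For Part (1), I would represent $x$ by a map $f \colon M \to \mathbb{C}P^\infty$; because $x^2$ generates $H^4(M)$, obstruction theory pushes $f$ into $\mathbb{C}P^2$, and the homotopy fiber has the integral homology of $S^3$. A Postnikov-tower analysis should then realize $M$ as homotopy equivalent to the total space of an $S^3$-bundle over $\mathbb{C}P^2$, identifying it with some $M'_{k,0}$. The $\mod 24$ periodicity in the homotopy type comes from the ambiguity of attaching the top cell compatibly with the spin structure, controlled by a combination of $\pi_6(S^3)\cong\mathbb{Z}/12$ and the stable $J$-homomorphism. The PL and topological refinement follows because $p_1$ is a topological invariant of spin manifolds (Novikov) and $L_7(\mathbb{Z}) = 0$, so once the $B$-bordism class is fixed the surgery exact sequence in the PL category shows that $p_1(M)$ is a complete invariant.

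For Part (2), Kirby-Siebenmann smoothing theory is obstruction-free in dimension $7$, so once a PL-homeomorphism $M \to M'_{k,0}$ is fixed, the set of smoothings is a torsor over $\Theta_7 \cong \mathbb{Z}/28$, giving $M \cong M'_{k,0}\#\Sigma$. To separate the $28$ smoothings I would define a Kreck-Stolz type invariant $s(M)$ by choosing a compact spin $8$-manifold $W$ with $\partial W = M$ and a compatible $B$-structure, and taking a rational combination of the signature of $W$ together with characteristic numbers built from $p_1(W)$ and a lift $\hat x \in H^2(W)$ of $x$, adjusted so that the result is independent of $W$ modulo $\tfrac{1}{28}\mathbb{Z}$. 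Additivity under $\#\Sigma$ then reduces to the classical Eells-Kuiper $\mu$-invariant computation, and $s$ detects all $28$ classes.

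The hardest step I anticipate is the construction and well-definedness of $s(M)$: because $H^4(M)$ is nontrivial (unlike the rational homology sphere setting where the $\mu$-invariant lives naturally), verifying $W$-independence requires an AHSS-level computation of $\Omega_8^{\mathrm{Spin}}(\mathbb{C}P^\infty)$ and a careful analysis of which combinations of characteristic numbers descend to cobordism invariants of the boundary. Closely tied to this is the homotopy-theoretic step of showing that the $S^3$-fibration produced over $\mathbb{C}P^2$ actually comes from a linear $\mathrm{SO}(4)$-bundle, so that the reference family $M'_{k,0}$ really exhausts the list.
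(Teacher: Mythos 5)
The weakest point of your plan is the PL/TOP half of part (1). The sentence ``$p_1$ is a topological invariant of spin manifolds (Novikov) and $L_7(\mathbb{Z})=0$, so \dots the surgery exact sequence in the PL category shows that $p_1(M)$ is a complete invariant'' conflates the two directions and skips the actual work. Novikov only gives that homeomorphic manifolds have equal $p_1$; the substantive claim is the converse, that homotopy equivalent manifolds in this class with equal $p_1$ are PL-homeomorphic, and neither ingredient you cite yields it: $L_7(\mathbb{Z})=0$ only says every normal invariant is realized (surjectivity of $\theta$'s kernel statement), while in the modified-surgery framework you start from, the obstruction to improving a $B$-bordism between $7$-manifolds to an s-cobordism lives in the monoid $l_8$, not in $L_7$, so its vanishing does no work either. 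What is actually needed---and what occupies most of the paper---is (a) the computation $\mathscr{S}^{\pl}(M'_{k,0})\hookrightarrow[M'_{k,0},G/\pl]\cong\mathbb{Z}$, which requires the stable splitting $\Sigma^N M'_{k,0}\simeq \Sigma^N\mathbb{C}P^2\vee(\Sigma^{N+1}\mathbb{C}P^2\cup e^{N+7})$ and the nontrivial extension $[S^4,G/\pl]\xrightarrow{\times 2}[\mathbb{C}P^2,G/\pl]$, and (b) the identification of the normal invariant of the fiber homotopy equivalence $f_m\colon M'_{k+6m,0}\to M'_{k,0}$ as $m$ (Lemma \ref{thm:6}), proved by extending $f_m$ over the disk bundles $W_k$ and using that $j_*\colon[\mathbb{C}P^2,G/\pl]\to[\mathbb{C}P^2,B\pl]$ is multiplication by $24$. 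Without (a) and (b) you can conclude neither that every such $M$ is PL-homeomorphic to some $M'_{k,0}$ nor that $p_1$ separates them; your proposal contains no substitute for this computation.

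The homotopy-theoretic half of part (1) also has gaps you only partly flag. You need (i) a map $M\to\mathbb{C}P^2$ inducing an isomorphism on $H^2$ (compressing from $\mathbb{C}P^\infty$ meets a potential obstruction in $\pi_6(\mathbb{C}P^2)\cong\mathbb{Z}_2$), (ii) a proof that its homotopy fiber has the homology of $S^3$, and (iii) above all the linearity step---that the resulting $S^3$-fibration is fiber homotopy equivalent to a linear bundle---which you yourself name as the hardest step but give no strategy for. The paper sidesteps (iii) entirely: it classifies the minimal CW structures directly (the $5$-skeleton is $(\mathbb{C}P^2\vee S^3)\cup_{[\iota_2,\iota_3]+\eta_3}e^5$ and the top attaching map lies in $\mathbb{Z}\oplus\mathbb{Z}_{12}\oplus\mathbb{Z}_2$), shows there are at most six spin homotopy types via explicit self-equivalences of the $5$-skeleton (the relation $[\iota_2,p]=p\eta_5$ absorbs the $\mathbb{Z}_2$ ambiguity and $[\iota_2,\eta_3\eta_4]=6a_3$ gives the mod $6$ reduction), and then notes that $M'_{0,0},\dots,M'_{5,0}$ realize them; your ``$\pi_6(S^3)$ plus stable $J$-homomorphism'' heuristic does not account for the extra $\mathbb{Z}_2$ class $p\eta_5$, which a priori doubles the count. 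By contrast, your part (2) is essentially the paper's argument: smoothing theory with $\pl/O$ $6$-connected gives $M\cong M'_{k,0}\#\Sigma$, and a Kreck--Stolz-type defect invariant with the $\hat{x}^2$ correction and signature-zero normalization, made well-defined by the computation of $\Omega_8^{\spin}(\mathbb{C}P^{\infty})_0$ and evaluated on $\#\Sigma_r$ via the $E_8$-plumbing coboundary (giving $-8r$), distinguishes the $28$ smoothings; that half is sound in outline.
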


As an application of our result and the surgery theorem of Wraith (cf. Theorem \ref{thm:11} and \cite{Wra97}), we obtain:
\begin{thm}\label{thm:7}
  Let $M$ be a closed, smooth, spin, $1$-connected $7$-manifold with integral cohomology ring isomorphic to $ H^*(\mathbb{C}P^2\times S^3)$ or $H^*(S^2\times S^5)$. Then $M$ admits a Riemannian metric with positive Ricci curvature.
\end{thm}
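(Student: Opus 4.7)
The plan is to combine the classification Theorem~\ref{thm:10} with Wraith's surgery theorem (Theorem~\ref{thm:11}). The unifying strategy is to write $M\cong N\#\Sigma^7$, where $N$ is a ``model'' manifold already known to carry a metric of positive Ricci curvature and $\Sigma^7$ is a homotopy $7$-sphere, and then to invoke Theorem~\ref{thm:11}: a connected sum is realized by surgery on an embedded $S^0$, and the relevant codimension, framing, and spin hypotheses are easily verified in dimension $7$.

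For $H^*(M)\cong H^*(\mathbb{C}P^2\times S^3)$, Theorem~\ref{thm:10}(2) supplies the diffeomorphism $M\cong M'_{k,0}\#\Sigma^7$ directly. The model $M'_{k,0}$ is an $S^3$-bundle over $\mathbb{C}P^2$, and since both the Fubini--Study base and the round fiber have positive Ricci curvature, a standard Riemannian submersion/canonical variation argument produces a positive Ricci metric on $M'_{k,0}$ (alternatively, one may deform the Grove--Ziller nonnegative sectional curvature metric cited in the introduction). Every homotopy $7$-sphere bounds a parallelizable $8$-manifold, and hence admits a positive Ricci metric by the main theorem of \cite{Wra97}. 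Theorem~\ref{thm:11} applied to the $S^0$-surgery realizing the connected sum then yields a positive Ricci metric on $M$.

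For $H^*(M)\cong H^*(S^2\times S^5)$, I would first need a classification analogous to Theorem~\ref{thm:10}. Using the same modified surgery framework as in the proof of Theorem~\ref{thm:10} (or quoting an existing classification in the literature), any such $M$ should be diffeomorphic to $N\#\Sigma^7$ with $N$ a spin $S^5$-bundle over $S^2$; a quick Gysin-sequence computation rules out the nontrivial bundle, since the nontrivial rank-$6$ classifying bundle has $w_2\neq 0$ and this pulls back injectively into $H^2$ of the sphere-bundle total space, violating the spin condition. Hence $N\cong S^2\times S^5$, equipped with its product positive Ricci metric, and the argument concludes exactly as in the previous case.

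The main obstacle I expect is the $S^2\times S^5$ classification step: producing the decomposition $M\cong (S^2\times S^5)\#\Sigma^7$ requires either a separate modified-surgery argument or a careful reference to the literature. Checking the hypotheses of Wraith's theorem for the $S^0$-surgery in our spin setting is routine, and the positive-Ricci curvature inputs for the individual pieces are by now standard.
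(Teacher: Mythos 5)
Your proposal breaks down at two points, one in each case. First, the connected-sum step: you propose to put positive Ricci metrics on $M'_{k,0}$ and on $\Sigma^7$ separately and then invoke Theorem~\ref{thm:11} for ``the $S^0$-surgery realizing the connected sum''. But Theorem~\ref{thm:11} is a statement about boundaries of tree-like plumbings, not a surgery-preserves-Ricci theorem, and unlike positive scalar curvature there is no general result allowing one to glue two positive Ricci metrics across a connected sum; this step is a genuine gap, not a routine verification of hypotheses. The paper avoids the issue entirely via Lemma~\ref{thm:8}: since $\Theta_7=bP_8$, every homotopy $7$-sphere bounds a plumbing of $D^4$-bundles over $S^4$, so $M'_{k,0}\#\Sigma^7$ is itself the boundary of a \emph{single} tree-like plumbing with exactly one vertex the disk bundle $D(\xi'_{k,0})$ over $\mathbb{C}P^2$ and all other vertices $D^4$-bundles over $S^4$; Theorem~\ref{thm:11} is then applied once to that plumbing, and no metric on $M'_{k,0}$ or on $\Sigma^7$ is ever constructed separately.

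Second, in the $H^*(S^2\times S^5)$ case your claimed decomposition $M\cong (S^2\times S^5)\#\Sigma^7$ is false in general, so the obstacle you flag cannot be overcome by ``a careful reference to the literature''. A closed spin simply connected $7$-manifold with this cohomology ring need not be homeomorphic to $S^2\times S^5$: the paper exhibits, as boundaries of the plumbings $W_{k,l,m}=D(\xi'_{m,0})\Box D(k\alpha+l\beta)$, manifolds of this type realizing $24$ distinct homeomorphism classes, distinguished by the Kreck--Stolz invariants $s_2$ and $s_3$, of which at most one is $S^2\times S^5$. Your Gysin-sequence argument only rules out the nontrivial linear $S^5$-bundle over $S^2$; it does not show that every such $M$ is a linear sphere bundle up to connected sum with a homotopy sphere. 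The paper's actual argument is a counting-and-realization scheme: one shows $M$ is a twisted double $S^2\times D^5\cup_f S^2\times D^5$, deduces from $\widetilde{\pi}_0(\sdiff(S^2\times S^4))\cong FC^3_4\oplus\Theta_7$ (with $|FC^3_4|=24$) that there are at most $24$ homeomorphism and $24\times 28$ diffeomorphism types, and then realizes all of them by plumbing boundaries (the $M_{k,l,m}$, together with connected sums with homotopy spheres handled again by Lemma~\ref{thm:8}), so that Theorem~\ref{thm:11} applies to every such $M$. Without a substitute for this argument your plan does not cover the second case; the first case, by contrast, is essentially correct once the connected sum is recast as a single plumbing rather than a gluing of metrics.
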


This paper is organized as follows. In Section \ref{sec:4}, we give some notations and tools which will be used later. A concrete example, $S^3$-bundles over $\mathbb{C}P^2$, is discussed in Section \ref{sec:2}, which plays a key role in our classification result. The proof of our main classification result, Theorem \ref{thm:10}, is presented in Section \ref{sec:3}. Finally, in Section \ref{sec:6}, we discuss when certain $7$-manifolds admit metrics of positive Ricci curvature.

\section{Preliminaries}\label{sec:4}

In this section, we introduce some basic concepts and facts needed in this paper for the convenience of readers who are not familiar with them.

\subsection{Actions of homotopy sets}\label{sec:1}

In this subsection, all spaces are equipped with base points and all maps are assumed to preserve base points. We refer to \cite{Ark11,Swi75} for details on related concepts and results.

For a map $f: X\to Y$, consider the cofiber sequence
\[
Y\xrightarrow{l}C_f\xrightarrow{q}\Sigma X,
\]
where $C_f$ is the mapping cone of $f$, $\Sigma X$ is the suspension of $X$, $l$ is the inclusion, and $q$ is the quotient map. We define a map $\psi:C_f\to \Sigma X\vee C_f$ by
\[
\psi[y]=(*,[y]),
\]
\[
\psi[x,t]=
  \begin{cases}
    ([x,2t],*), & \mbox{if } 0\leq t\leq \frac{1}{2}, \\
    (*,[x,2t-1]), & \mbox{if } \frac{1}{2}\leq t\leq 1.
  \end{cases}
\]
For any space $Z$ together with maps $g:C_f\to Z$ and $a:\Sigma X\to Z$, define $g^a:C_f\to Z$ as the composition
\[
C_f\xrightarrow{\psi} \Sigma X\vee C_f \xrightarrow{(a, g)} Z.
\]
This defines an operation of $[\Sigma X,Z]$ on $[C_f,Z]$ by
\[
[g]^{[a]}=[g^a].
\]
Furthermore, when $Z$ is an H-space, the set $[C_f,Z]$ has a group structure.

\begin{prop}
  If $Z$ is an H-space, then $u^{\alpha}=q^*(\alpha)u=uq^*(\alpha)$ for all $\alpha\in [\Sigma X, Z]$ and $u\in[C_f,Z]$.
\end{prop}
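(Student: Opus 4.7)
The plan is to realize both sides as explicit compositions of maps and then exhibit a homotopy between them. Since $Z$ is an H-space with multiplication $\mu:Z\times Z\to Z$, the basepoint is a two-sided unit up to homotopy, so for any $a:\Sigma X\to Z$ and $g:C_f\to Z$ the wedge map $(a,g):\Sigma X\vee C_f\to Z$ is homotopic to $\mu\circ(a\times g)\circ j$, where $j:\Sigma X\vee C_f\hookrightarrow\Sigma X\times C_f$ is the inclusion. In particular
\[
g^{a}=(a,g)\circ\psi\ \simeq\ \mu\circ(a\times g)\circ j\circ\psi.
\]
On the other hand, the H-space product $q^{*}(\alpha)\cdot u$ on $[C_f,Z]$ is by definition represented by $\mu\circ((a\circ q)\times g)\circ\Delta=\mu\circ(a\times g)\circ(q\times\id)\circ\Delta$, where $\Delta:C_f\to C_f\times C_f$ is the diagonal.

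The first identity thus reduces to producing a based homotopy $j\circ\psi\simeq(q\times\id)\circ\Delta$ as maps $C_f\to\Sigma X\times C_f$. Both maps agree on $Y\subset C_f$ (each sends $y$ to $(*,y)$, since $q$ collapses $Y$), so it only remains to interpolate on the cone coordinates. An explicit interpolating homotopy in the cone parameter does the job: for $[x,t]$ with $t\le 1/2$, set $s\mapsto\bigl([x,(1+s)t],[x,(1-s)t]\bigr)$, and for $t\ge 1/2$, set $s\mapsto\bigl([x,(1-s)t+s],[x,(1-s)t+s(2t-1)]\bigr)$. Routine checks show continuity at $t=1/2$, compatibility at the boundary of the cone ($t=0$ gives $(*,*)$, $t=1$ matches the formula on $Y$), and that $s=0$, $s=1$ recover $(q\times\id)\circ\Delta$ and $j\circ\psi$ respectively.

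The second equality $u\cdot q^{*}(\alpha)=u^{\alpha}$ is obtained symmetrically. One defines a swapped coaction $\psi':C_f\to C_f\vee\Sigma X$ by interchanging the wedge factors in the definition of $\psi$, and observes on the nose that $(g,a)\circ\psi'=(a,g)\circ\psi=g^{a}$. The analogous homotopy $j'\circ\psi'\simeq(\id\times q)\circ\Delta$, with $j':C_f\vee\Sigma X\hookrightarrow C_f\times\Sigma X$, then identifies $g^{a}$ with $u\cdot q^{*}(\alpha)$. The main obstacle in this argument is simply the careful construction of the interpolating homotopy above; everything else is formal manipulation of the H-space and cofiber structures.
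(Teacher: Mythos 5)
Your argument is correct, and it is essentially the standard proof of this fact (the paper itself states the proposition without proof, citing \cite{Ark11,Swi75}): reduce both products to maps through $\Sigma X\times C_f$ via the H-space structure, and check that $j\circ\psi\simeq(q\times\id)\circ\Delta$ by an explicit cone-parameter homotopy, with the second equality following by the symmetric pinch $\psi'$. The only point you leave implicit is that the unit homotopies $\mu(-,*)\simeq\id\simeq\mu(*,-)$ can be taken to be based so that they glue over the wedge, which is standard for well-pointed spaces and does not affect the argument.
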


The map $f:X\to Y$ induces the following exact sequence:
\[
\cdots \to [\Sigma Y,Z]\xrightarrow{(\Sigma f)^*}[\Sigma X,Z]\xrightarrow{q^*}[C_f,Z]\xrightarrow{l^*}[Y,Z].
\]

\begin{thm}
  \begin{enumerate}[$(1)$]
    \item Let $\rho, \sigma\in[C_f,Z]$. Then $l^*(\rho)=l^*(\sigma)$ if and only if $\sigma=\rho^{\gamma}$ for some $\gamma\in[\Sigma X,Z]$;
    \item Let $\gamma, \delta\in[\Sigma X,Z]$. Then $q^*(\gamma)=q^*(\delta)$ if and only if $\gamma=(\Sigma f)^*(\epsilon)+\delta$ for some $\epsilon\in[\Sigma Y, Z]$.
  \end{enumerate}
\end{thm}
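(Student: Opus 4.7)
Both statements unpack the standard structure of the Puppe cofiber sequence, with the operation $u\mapsto u^\alpha$ being the action induced by the coaction $\psi$; I would follow the approach in \cite{Ark11, Swi75}.

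The easy directions are direct computations. For $(1)\Leftarrow$, if $\sigma$ is represented by $(a, g) \circ \psi$ with $g$ representing $\rho$, then $\psi[y] = (*, [y])$ immediately gives $\sigma \circ l = g \circ l$, so $l^*(\sigma) = l^*(\rho)$. For $(2)\Leftarrow$, the Puppe sequence is exact at $\Sigma X$, i.e.\ $\Sigma f \circ q \simeq *$, so $q^* \circ (\Sigma f)^* = 0$; hence $q^*((\Sigma f)^*(\epsilon) + \delta) = q^*(\delta)$, where the sum is taken using the cogroup structure on $[\Sigma X, Z]$.

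For the hard direction of (1), the plan is to convert a based homotopy $H : Y \times I \to Z$ from $\rho \circ l$ to $\sigma \circ l$ into a map $a : \Sigma X \to Z$ by a splicing argument: the restrictions $\rho|_{CX}$ and $\sigma|_{CX}$, together with the homotopy $H \circ (f \times \id)$ on $X$, assemble into a map on the double cone $CX \cup_X (X \times I) \cup_X CX \simeq \Sigma X$. Checking that $\rho^a \simeq \sigma$ then reduces to unwinding the explicit definition of $\psi$, so that the two halves of the cone in $C_f$ are recovered respectively from $a$ and from $\rho$. For the hard direction of (2), the plan is to appeal to the next stage of the Puppe sequence, which identifies the cofiber of $q : C_f \to \Sigma X$ with $\Sigma Y$ via (up to a sign) $\Sigma f$. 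If $q^*(\gamma) = q^*(\delta)$, then the difference $\gamma - \delta$ in $[\Sigma X, Z]$ satisfies $(\gamma - \delta) \circ q \simeq *$; the chosen null-homotopy extends $\gamma - \delta$ across the mapping cone of $q$ to a map $\epsilon : \Sigma Y \to Z$ with $(\Sigma f)^*(\epsilon) = \gamma - \delta$, which, after absorbing the sign into $\epsilon$, yields the stated formula.

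The main obstacle I expect is the bookkeeping with base points and sign conventions in both hard directions: verifying in (1) that the spliced map $a$ is well-defined up to based homotopy and truly satisfies $\rho^a \simeq \sigma$ rather than a twisted variant, and tracing the sign of the Puppe connecting map $\Sigma X \to \Sigma Y$ in (2) so that the final equation reads $\gamma = (\Sigma f)^*(\epsilon) + \delta$ exactly as written. Once these are pinned down, the argument is essentially the general fact that $\psi$ makes $C_f$ a principal $[\Sigma X, Z]$-set over $[Y, Z]$, refined by the analogous statement one stage further along the sequence.
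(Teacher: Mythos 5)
The paper itself gives no proof of this theorem: it is quoted as standard background from \cite{Ark11,Swi75}, so your sketch has to be measured against the usual textbook arguments. For part (1) your plan is exactly the standard one: the easy direction is immediate from $\psi[y]=(*,[y])$, and the converse is the classical difference construction, splicing $\rho|_{CX}$, $H\circ(f\times\id)$ and $\sigma|_{CX}$ over $CX\cup_X(X\times I)\cup_X CX\simeq\Sigma X$; the remaining verification $\rho^{a}\simeq\sigma$ is fiddly reparametrization but routine, as you acknowledge. Part (1) is fine as a sketch.

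Part (2), however, has a genuine gap in both directions: you treat $q^*$ as if it were additive. From $q^*\circ(\Sigma f)^*=0$ you infer $q^*((\Sigma f)^*(\epsilon)+\delta)=q^*(\delta)$, and from $q^*(\gamma)=q^*(\delta)$ you infer $(\gamma-\delta)\circ q\simeq *$. Neither inference is available: $C_f$ is not a co-H space, so for general $Z$ (and the paper applies the theorem to targets such as $BSO(3)$ and $BF(4)$, which are not H-spaces) the set $[C_f,Z]$ is only pointed and $q^*$ is not a homomorphism. What is true is the coaction identity $q^*(\alpha+\beta)=(q^*\beta)^{\alpha}$, coming from $(\id\vee q)\circ\psi\simeq\mu\circ q$ with $\mu$ the pinch on $\Sigma X$; with it, both of your claims become exactly the assertion that elements of $\im(\Sigma f)^*$ act trivially on $\im q^*$ --- which is the content of statement (2) itself, not a formal consequence of exactness. (A naive homotopy that shrinks the $\Sigma Y$-portion of $(\Sigma f\vee\id)\circ\psi$ by reparametrization becomes discontinuous at its endpoint, so real geometric input is needed.) The standard repairs are: apply part (1) to the cofibration $l\colon Y\to C_f$, using the homotopy equivalence $C_l\simeq\Sigma X$ under which the inclusion $C_f\to C_l$ corresponds to $q$ and --- this is the key lemma --- the coaction $C_l\to\Sigma Y\vee C_l$ corresponds to $((\mp\Sigma f)\vee\id)\circ\mu$; or argue directly, noting that a homotopy $H\colon\gamma q\simeq\delta q$ restricted to $Y\times I$ factors through $\Sigma Y$ (since $q$ collapses $Y$), which produces the candidate $\epsilon$, and then verify $\gamma=(\Sigma f)^*(\epsilon)+\delta$ by a splicing argument parallel to your part (1). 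Your final step in (2)$\Rightarrow$ (extending over the cofiber of $q$ and identifying it with $\Sigma Y$ via $\pm\Sigma f$, absorbing the sign into $\epsilon$) is fine, but only once $(\gamma-\delta)\circ q\simeq *$ has genuinely been established.
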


\subsection{Homotopy groups of spheres}

We list some homotopy groups of spheres which will be used later (cf. \cite{Tod62}).

\begin{thm}
  \begin{enumerate}[$(1)$]
    \item $\pi_n(S^n)\cong\mathbb{Z}\{\iota_n\}$, where $\iota_n$ is represented by the identity map of $S^n$;
    \item $\pi_{n+1}(S^n)\cong
    \begin{cases}
      \mathbb{Z}\{\eta_2\}, & \mbox{if } n=2 \\
      \mathbb{Z}_2\{\eta_n\}, & \mbox{if } n\geq 3,
    \end{cases}$

    where $\eta_2$ is represented by the Hopf map $S^3\to S^2$ and $\eta_n=\Sigma^{n-2}\eta_2$;
    \item $\pi_{n+2}(S^n)\cong\mathbb{Z}_2\{\eta_n\eta_{n+1}\}$ for $n\geq 2$;
    \item $\pi_{n+3}(S^n)\cong
    \begin{cases}
      \mathbb{Z}_2\{\eta_2\eta_3\eta_4\} , & \mbox{if } n=2 \\
      \mathbb{Z}_{12}\{a_3\}, & \mbox{if } n=3 \\
      \mathbb{Z}_{12}\{\Sigma a_3\}\oplus\mathbb{Z}\{\nu_4\}, & \mbox{if } n=4 \\
      \mathbb{Z}_{24}\{\nu_n\}, & \mbox{if } n\geq 5,
    \end{cases}$

    where $\nu_4$ is represented by the Hopf map $S^7\to S^4$, $\nu_n=\Sigma^{n-4}\nu_4$, $\eta_3\eta_4\eta_5=6a_3$, and $\Sigma^2a_3=2\nu_5$.
  \end{enumerate}
\end{thm}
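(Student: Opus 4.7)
The plan is to prove each of the four parts in turn, using progressively deeper machinery as the stem grows. All four computations are classical and collected in Toda's book \cite{Tod62}; what follows is the conceptual route I would retrace rather than a grind through the tables there.

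For (1), $\pi_n(S^n)\cong\mathbb{Z}\{\iota_n\}$ is immediate from the Hurewicz theorem applied to the $(n-1)$-connected space $S^n$. For (2) at $n=2$, the long exact sequence of the Hopf fibration $S^1\to S^3\to S^2$ gives $\pi_3(S^2)\cong\pi_3(S^3)\cong\mathbb{Z}$ since $\pi_k(S^1)=0$ for $k\geq 2$, and the generator is exactly the Hopf map $\eta_2$. For $n\geq 3$ I would invoke the Freudenthal suspension theorem: $\Sigma\colon\pi_3(S^2)\to\pi_4(S^3)$ is surjective with kernel generated by the Whitehead product $[\iota_2,\iota_2]$, and $[\iota_2,\iota_2]=\pm 2\eta_2$ (detected by the Hopf invariant since $H(\eta_2)=1$). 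Hence $\pi_4(S^3)\cong\mathbb{Z}_2\{\eta_3\}$, and further suspension is an isomorphism in the stable range.

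For (3), the composite $\eta_n\eta_{n+1}$ has order dividing $2$ since $2\eta_{n+1}=0$ for $n\geq 3$, and nonvanishing follows either from a secondary Steenrod operation in the mapping cone $C_{\eta^2}$, or from Serre's $\pmod{C}$ computation of the stable $2$-stem. For (4), the new ingredient is $\nu_4$ coming from the Hopf fibration $S^3\to S^7\to S^4$, whose long exact sequence together with $\pi_7(S^3)$ being $2$-primary torsion produces a $\mathbb{Z}$ summand in $\pi_7(S^4)$. The splitting at $n=4$ into $\mathbb{Z}_{12}\{\Sigma a_3\}\oplus\mathbb{Z}\{\nu_4\}$ is then recovered from Hilton's theorem, or equivalently from the James EHP sequence, where the Hopf invariant $H\colon\pi_7(S^4)\to\pi_7(S^7)$ splits off the $\mathbb{Z}$. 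Freudenthal then gives $\pi_{n+3}(S^n)\cong\mathbb{Z}_{24}\{\nu_n\}$ for $n\geq 5$, which is Serre's classical stable $3$-stem.

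The main obstacle is the $n=3$ case: computing $\pi_6(S^3)\cong\mathbb{Z}_{12}$ and nailing down the explicit unstable relations $\eta_3\eta_4\eta_5=6a_3$ and $\Sigma^2 a_3=2\nu_5$. These lie strictly outside the stable range and require either Toda's systematic EHP analysis with explicit Toda brackets and higher Hopf invariants, or a Serre spectral sequence computation using the Postnikov tower of $S^3$ through the relevant dimensions. Since all of this is worked out in detail in \cite{Tod62}, the proof as stated really amounts to recording and citing the relevant tables there rather than reproving them.
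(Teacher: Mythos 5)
Your sketch is correct, and it matches the paper's treatment: the paper states these groups without proof, citing Toda's tables \cite{Tod62}, exactly as your argument ultimately does for the genuinely hard unstable facts ($\pi_6(S^3)\cong\mathbb{Z}_{12}$ and the relations $\eta_3\eta_4\eta_5=6a_3$, $\Sigma^2a_3=2\nu_5$). The conceptual route you retrace (Hurewicz, Hopf fibrations, Freudenthal with $[\iota_2,\iota_2]=\pm2\eta_2$, EHP/Hilton splitting for $\pi_7(S^4)$) is the standard one and contains no errors.
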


\subsection{Surgery theory}

Recall the simply connected version of the surgery exact sequence due to Browder, Novikov, Sullivan, Wall, and Kirby-Seibenmann:
\[
\cdots \to L_{n+1}(\mathbb{Z})\xrightarrow{\omega} \mathscr{S}^{\cat}(M) \xrightarrow{\eta} [M,G/\cat] \xrightarrow{\theta} L_n(\mathbb{Z}).
\]
We will explain it briefly. Further details can be found in \cite{Ran02,Wal69}.

\begin{itemize}
  \item The category $\cat$ can be either the topological (TOP), piecewise linear (PL), or smooth (O) category. The space $M$ is a simply connected, closed, $n$-dimensional $\cat$-manifold.
  \item $L_n(\mathbb{Z})\cong
  \begin{cases}
    \mathbb{Z}, & \mbox{if } n\equiv 0 \mod 4 \\
    \mathbb{Z}_2, & \mbox{if } n\equiv 2 \mod 4 \\
    0, & \mbox{if $n$ is odd}.
  \end{cases}$
  \item A $\cat$-manifold structure $(N,f)$ on $M$ is an $n$-dimensional $\cat$-manifold $N$ together with a homotopy equivalence $f:N\to M$. The $\cat$-structure set $\mathscr{S}^{\cat}(M)$ of $M$ is the set of equivalence classes of $\cat$-manifold structures $(N,f)$, subject to the equivalence relation: $(N,f)\sim (N',f')$ if there exists a $\cat$-isomorphism $g:N\to N'$ such that $f$ is homotopy equivalent to $f'\circ g$.
  \item The space $G/\cat$ denotes the homotopy fiber of $B\cat\to BG$. Homotopy classes of maps $M\to G/\cat$ are in 1-1 correspondence with the equivalence classes of pairs $(\eta,t)$, where $\eta$ is a stable $\cat$-bundle over $M$ and $t$ is a fiber homotopy trivialization of $J\eta$, with $J\eta$ being $\eta$ regarded as a spherical fibration. The map $[M,G/\cat]\to [M,B\cat]$ can be interpreted by forgetting the trivialization.
  \item A homotopy equivalence $f:N\to M$ of $n$-dimensional $\cat$-manifolds can determine a fiber homotopy trivialization $t(f)$ of $J(\nu_M-(f^{-1})^*(\nu_N))$ in a standard way, where $\nu_M$ denotes the $\cat$-normal bundle of $M$ and $f^{-1}$ is the homotopy inverse of $f$. With the above interpretation of $[M,G/\cat]$, we have
      $$
      \eta(f)=(\nu_M-(f^{-1})^*(\nu_N),t(f)).
      $$
\end{itemize}

Then we list some facts which will be used later. We refer to \cite{Sul96} for further details.
\begin{enumerate}
  \item For $\cat=\pl$, $\eta$ is injective.
  \item If $W$ is a $\cat$-manifold with boundary, then the $\cat$-structure set $\mathscr{S}^{\cat}(W)$ can be similarly defined as
      $$
      \mathscr{S}^{\cat}(W)=\{f:(V,\partial V)\xrightarrow{\simeq} (W,\partial W)\}/\sim.
      $$
      Here $V$ is a $\cat$-manifold with boundary $\partial V$ and $f:(V,\partial V)\xrightarrow{\simeq} (W,\partial W)$ is a homotopy equivalence of pairs. The relation $(V,f)\sim (V',f')$ holds if there exists a $\cat$-isomorphism $g:V\to V'$ such that $f$ is homotopic to $f'\circ g$ as a map of pairs. The map $\eta:\mathscr{S}^{\cat}(W)\to  [W,G/\cat]$ can be similarly interpreted as above and is an isomorphism when $\pi_1(W)=\pi_1(\partial W)=1$. We also have the following commutative diagram:
      \[
      \begin{tikzcd}
      \mathscr{S}^{\cat}(W) \arrow[r,"\eta"] \arrow[d,"i^*"] & {[W, G/\cat]} \arrow[d,"i^*"] \\
      \mathscr{S}^{\cat}(\partial W) \arrow[r,"\eta"] & {[\partial W,G/\cat]}
      \end{tikzcd}
      \]
      where $i:\partial W\to W$ is the inclusion map and $i^*:\mathscr{S}^{\cat}(W) \to \mathscr{S}^{\cat}(\partial W)$ is given by restricting a homotopy equivalence of pairs to the boundary.
  \item $\pi_n(G/\pl)\cong
  \begin{cases}
    \mathbb{Z}, & \mbox{if } n\equiv 0 \mod 4 \\
    \mathbb{Z}_2, & \mbox{if } n\equiv 2 \mod 4 \\
    0, & \mbox{if $n$ is odd}.
  \end{cases}$
\end{enumerate}

\subsection{Smoothing theory}

Smoothing theory is concerned with the problem of finding and classifying smoothings of PL-manifolds. We will give a review of what is needed later. For more details, one can refer to \cite{HM74}.

Let $M$ be a PL-manifold. A smoothing of $M$ is a pair $(N,f)$, where $N$ is a smooth manifold and $f:N\to M$ is a PL-homeomorphism. Two smoothings of $M$ are concordant if they extend to a smoothing of $M\times I$. Let $\mathcal{S}^{\pl/O}(M)$ be the set of concordance classes of smoothings of $M$.

\begin{thm}[{\cite[II.4.2]{HM74}}]
  $\mathcal{S}^{\pl/O}(M)\cong [M,\pl/O]$, where $\pl/O$ is the homotopy fiber of $BO\to B\pl$.
\end{thm}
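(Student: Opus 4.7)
The plan is to use the classifying space description of tangent (micro)bundles. A smooth $n$-manifold $N$ has a stable tangent vector bundle classified by a map $\tau_N:N\to BO$, while a PL $n$-manifold $M$ has a tangent PL-microbundle classified by $\tau_M:M\to B\pl$, and the natural forgetful map $BO\to B\pl$ corresponds to forgetting the smooth structure on the tangent bundle. Given a smoothing $(N,f)$ of $M$, transporting $\tau_N$ along $f$ yields a vector bundle lift of $\tau_M$ to $BO$, and concordant smoothings produce homotopic lifts. This defines a map from $\mathcal{S}^{\pl/O}(M)$ to the set of homotopy classes of lifts of $\tau_M$ along $BO\to B\pl$.

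The heart of the theorem is the converse: every lift of $\tau_M$ to $BO$ arises from a smoothing, and homotopic lifts yield concordant smoothings. I would prove this by induction over a PL-handle decomposition of $M$. Assuming a smoothing is defined on the codimension-zero submanifold obtained by attaching handles of index $\leq i$, the obstruction to extending the smoothing across an $(i+1)$-handle lies in $\pi_{i+1}(\pl/O)$ and vanishes precisely when the corresponding lift extends over the handle. Uniqueness up to concordance follows from a parallel extension argument on $M\times I$. Assembling the local analyses yields the desired bijection; this is essentially the content of the Cairns--Hirsch smoothing theorem and is the real work of the proof.

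Finally, for the fibration sequence $\pl/O\to BO\to B\pl$, once one lift of $\tau_M$ is chosen as a basepoint (and such a lift exists whenever $\mathcal{S}^{\pl/O}(M)$ is nonempty, so the statement is only informative in this case), homotopy classes of lifts of a fixed map through $BO\to B\pl$ are in natural bijection with $[M,\pl/O]$ via the fiberwise action of $\pl/O$. Composing with the bijection established above yields $\mathcal{S}^{\pl/O}(M)\cong [M,\pl/O]$.

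The main obstacle is the handle-theoretic inductive step: one must ensure that the obstructions vanish exactly at the correct stage of the Postnikov filtration of $\pl/O$, and that the extensions respect concordance globally rather than merely isotopy locally. This uses PL tubular neighborhood theory, product neighborhood theorems, and the fact that a smoothing on a PL disk bundle is determined (up to concordance) by a smoothing of the zero section together with compatible bundle data; these ingredients are exactly what the proof of \cite[II.4.2]{HM74} systematizes.
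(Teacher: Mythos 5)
This statement is quoted background: the paper offers no proof of it at all, simply citing \cite[II.4.2]{HM74}, so there is no internal argument to compare against. Your outline is the standard one and is essentially the route that Hirsch--Mazur systematize: identify concordance classes of smoothings with homotopy classes of lifts of the stable tangent classifying map $\tau_M\colon M\to B\pl$ through $BO\to B\pl$, prove the hard geometric input (Cairns--Hirsch product smoothing and concordance-extension theorems) by induction over a handle decomposition, and then translate lifts into $[M,\pl/O]$ via the fibration $\pl/O\to BO\to B\pl$; you also correctly flag that the bijection requires a base smoothing, i.e.\ it is only natural once $\mathcal{S}^{\pl/O}(M)\neq\emptyset$, which is the situation in which the paper uses it. Two small points: the obstruction to extending a smoothing across an $(i+1)$-handle (attached along $S^{i}\times D^{n-i-1}$) lies in $\pi_{i}(\pl/O)$, not $\pi_{i+1}(\pl/O)$ --- equivalently the obstructions to lifting $\tau_M$ sit in $H^{k+1}(M;\pi_k(\pl/O))$ --- so your indexing is off by one; and the actual argument in \cite{HM74} is organized through sliced concordances of smoothings of PL bundles rather than literally handle by handle, though the geometric input is the same. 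Since the genuinely hard steps are exactly the theorems of \cite{HM74} that the paper itself invokes, deferring them to that reference is appropriate here.
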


This theorem implies that $\pi_n(\pl/O)$ is isomorphic to $\mathcal{S}^{\pl/O}(S^n)$. Together with the generalized Poincar\'e conjecture and h-cobordism theorem, we have $\pi_n(\pl/O)\cong\Theta_n$ ($n\geq 6$), where $\Theta_n$ is the group of homotopy $n$-spheres. It is known that $\pl/O$ is $6$-connected (cf. \cite[p.123]{HM74}).

\section{Examples: $S^3$-bundles over $\mathbb{C}P^2$}\label{sec:2}

In this section, we will discuss the properties of a special kind of manifold. These manifolds are total spaces of linear $S^3$-bundles over $\mathbb{C}P^2$.

To begin with, we review some basic facts of $4$-dimensional vector bundles over $\mathbb{C}P^2$. Let $\vect_{\mathbb{R}}^{n,+}(X)$ be the set of isomorphism classes of $n$-dimensional oriented real vector bundles over a space $X$. It is well-known that $\vect_{\mathbb{R}}^{n,+}(X)\cong [X,BSO(n)]$. In particular, $4$-dimensional oriented vector bundles over $\mathbb{C}P^2$ are in 1-1 correspondence with elements in $[\mathbb{C}P^2, BSO(4)]$. For this homotopy set, we have the following exact sequence:
\[
[S^4,BSO(4)]\rightarrow [\mathbb{C}P^2,BSO(4)] \rightarrow [S^2,BSO(4)].
\]
We have $\vect_{\mathbb{R}}^{4,+}(S^4)\cong \pi_4(BSO(4))\cong \mathbb{Z}\{\alpha\}\oplus\mathbb{Z}\{\beta\}$. The bundles $\alpha,\beta$ can be chosen to satisfy the following condition:
\begin{center}
  \begin{tabular}{c|c|c}
   & $\alpha$ & $\beta$ \\
  \hline
  $e$ & $0$ & $\omega_{S^4}$ \\
  \hline
  $p_1$ & $4\omega_{S^4}$ & $-2\omega_{S^4}$\\
\end{tabular}
\end{center}
where $\omega_{S^4}$ is the orientation cohomology class of $S^4$. It follows that a $4$-dimensional oriented vector bundle over $S^4$ is determined up to isomorphism by its first Pontryagin class and Euler class. We also have $\vect_{\mathbb{R}}^{4,+}(S^2)\cong \pi_2(BSO(4))\cong \mathbb{Z}_2$, which means there are two oriented $4$-dimensional vector bundles over $S^4$ up to isomorphism. These two bundles can be distinguished by their second Stiefel-Whitney classes, and both have extensions over $\mathbb{C}P^2$: the trivial one extends to $\epsilon^4$, and the nontrivial one extends to $\gamma\oplus\epsilon^2$, where $\gamma$ is the tautological line bundle over $\mathbb{C}P^2$. Combining these facts, it is not difficult to see that a $4$-dimensional vector bundle over $\mathbb{C}P^2$ is determined up to isomorphism by its first Pontryagin class, Euler class, and second Stiefel-Whitney class (cf. \cite{DW59}), and can be constructed as follows:
\[
\begin{matrix}
  \text{Spin case:} & \quad & \text{Nonspin case:} \\
  \begin{tikzcd}
\xi_{k,l} \arrow[r] \arrow[d] & k\alpha+l\beta \arrow[d] \\
\mathbb{C}P^2 \arrow[r,"q"] & S^4
\end{tikzcd} & \quad &
\begin{tikzcd}
\xi'_{k,l} \arrow[r] \arrow[d] & (\gamma\oplus\epsilon^2)\vee (k\alpha+l\beta) \arrow[d] \\
\mathbb{C}P^2 \arrow[r,"v"] & \mathbb{C}P^2 \vee S^4
\end{tikzcd}
\end{matrix}
\]
where $q$ and $v$ are the obvious collapsing maps. Using the additivity of the first Pontryagin class and the Euler class under Whitney sum of $4$-dimensional vector bundles over $S^4$, we have
\begin{align*}
  p_1(\xi_{k,l}) & =(4k-2l)\omega_{\mathbb{C}P^2}, & e(\xi_{k,l}) & =l\omega_{\mathbb{C}P^2}, \\
  p_1(\xi'_{k,l}) & =(4k-2l+1)\omega_{\mathbb{C}P^2}, & e(\xi'_{k,l}) & =l\omega_{\mathbb{C}P^2},
\end{align*}
where $\omega_{\mathbb{C}P^2}$ is the orientation cohomology class of $\mathbb{C}P^2$.

We now turn to the discussion of our examples. Let $M_{k,l}=S(\xi_{k,l})$ and $M'_{k,l}=S(\xi'_{k,l})$, where $S(\xi)$ is the associated sphere bundle of the vector bundle $\xi$. Then $M_{k,l}$ and $M'_{k,l}$ are $1$-connected $7$-manifolds whose cohomology ring satisfies the following condition:
\begin{enumerate}
  \item isomorphic to $H^*(\mathbb{C}P^2\times S^3)\cong \mathbb{Z}[x,y]/(x^3,y^2)$, if $l=0$;
  \item $H^2\cong \mathbb{Z}\{u\}$, $H^3=0$, $H^4\cong\mathbb{Z}_l\{u^2\}$, if $l\neq 0$.
\end{enumerate}
In addition, $w_2(M'_{k,l})=0$ and $w_2(M_{k,l})\neq 0$.

For the classification of $M_{k,l}$ and $M'_{k,l}$ when $l\neq 0$, one can refer to \cite{EZ14}. We point out that only a partial result has been obtained for the homotopy classification since the homotopy classification of $7$-manifolds satisfying condition (2) is incomplete, see \cite{Kru97,Kru98}. However, things are much easier for the case $l=0$.

\begin{thm}\label{thm:3}
  \begin{enumerate}
    \item Two manifolds $M_{k,0}$ and $M_{k',0}$ are homeomorphic or diffeomorphic if and only if $k=k'$;
    \item Two manifolds $M_{k,0}$ and $M_{k',0}$ are homotopy equivalent if and only if $k\equiv k' \mod 6$.
  \end{enumerate}
  These two statements also hold when $M_{k,0}$ and $M_{k',0}$ are replaced by $M'_{k,0}$ and $M_{k',0}'$, respectively.
\end{thm}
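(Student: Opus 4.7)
My plan is to reduce both parts of the theorem to statements about the first Pontryagin class. The ``if'' direction in (1) is immediate.

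For part (1), I will compute $p_1(M_{k,0})$ directly. Using the standard splitting $TM_{k,0}\oplus\epsilon^1\cong \pi^*(T\mathbb{C}P^2\oplus\xi_{k,0})$ together with $p_1(T\mathbb{C}P^2)=3x^2$ and the previously computed $p_1(\xi_{k,0})=4k\,x^2$, I obtain $p_1(M_{k,0})=(4k+3)\,x^2$, where $x^2\in H^4(M_{k,0})\cong \mathbb{Z}$ is the canonical generator (squaring $x\in H^2$ makes it sign-independent). By Novikov's theorem on topological invariance of rational Pontryagin classes combined with the torsion-freeness of $H^4$, the integer $4k+3$ is a homeomorphism invariant of $M_{k,0}$. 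Since distinct values of $k$ give distinct values of $4k+3$, this yields the ``only if'' direction of (1). The $M'_{k,0}$ case is parallel, with $p_1(M'_{k,0})=4(k+1)\,x^2$.

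For the ``only if'' direction of (2), I will show that $p_1\pmod{24}$ is a homotopy invariant for our class of manifolds. Given a homotopy equivalence $f\colon M\to M'$, the Spivak normal fibrations correspond under $f$, and two vector bundle reductions of a given Spivak fibration differ by a class in $[M,G/O]$. From the fiber sequence $G/O\to BO\to BG$ combined with the surjectivity of the $J$-homomorphism $\pi_3(SO)\twoheadrightarrow\pi_4(BG)=\mathbb{Z}_{24}$, the image of $\pi_4(G/O)\to\pi_4(BO)=\mathbb{Z}$ equals $24\mathbb{Z}$. Hence any change of reduction alters $p_1$ by a multiple of $24$, so $p_1\pmod{24}$ depends only on the homotopy type. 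Applying this gives $4k+3\equiv 4k'+3\pmod{24}$, i.e.\ $k\equiv k'\pmod 6$.

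The main obstacle is the ``if'' direction of (2): exhibiting an honest homotopy equivalence $M_{k,0}\simeq M_{k+6,0}$. My plan is a Postnikov tower comparison through dimension $7$: once $w_2$, the cohomology ring, and $p_1\pmod{24}$ all agree for the two manifolds, their Postnikov invariants coincide through dimension $7$, and an equivalence exists by standard obstruction theory on the $7$-skeleton (combined with Poincar\'e duality to extend across the top cell). Alternatively, one can work at the level of classifying maps $\mathbb{C}P^2\to BSO(4)$, noting that $\xi_{k+6,0}-\xi_{k,0}=6\alpha$ pulled back from $S^4$, and then exploit the order of $J(\alpha)$ in $\pi_6(S^3)=\mathbb{Z}_{12}$ together with suitable automorphisms of base and fiber to identify the total spaces. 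This ``if'' direction is the technical heart of the proof.
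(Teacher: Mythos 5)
Your part (1) and the ``only if'' half of (2) are essentially the paper's own route: compute $p_1(M_{k,0})=(4k+3)x^2$, $p_1(M'_{k,0})=(4k+4)x^2$, invoke Novikov for topological invariance, and use the mod $24$ homotopy invariance of $p_1$ (the paper simply cites Hirzebruch for the latter). One caveat on your normal-invariant justification of the mod $24$ statement: the image of $\pi_4(G/O)\to\pi_4(BO)$ is $24\mathbb{Z}$ in terms of the generator, but that generator has $p_1=\pm 2$, so the top-cell contribution changes $p_1$ by multiples of $48$; the sharp modulus $24$ comes from difference classes in $[M,G/O]$ that are nontrivial on the $2$-skeleton ($\pi_2(G/O)\cong\mathbb{Z}_2$), which your $\pi_4$ computation does not see. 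As stated your argument is therefore incomplete (though the conclusion is the classical fact the paper cites), and indeed in this family the realized differences are exactly multiples of $24$, not $48$.

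The genuine gap is the ``if'' half of (2), which you yourself flag as the technical heart but only sketch, and neither sketch works as written. The Postnikov/obstruction-theory plan assumes precisely what has to be proved: that $w_2$, the cohomology ring and $p_1\bmod 24$ determine the homotopy type through dimension $7$. In the paper this is the content of the whole Section 4 analysis (computing $\pi_6$ of the $5$-skeleton, using $[\iota_2,p]=p\eta_5$ and $\eta_3\eta_4\eta_5=6a_3$, and constructing self-equivalences of the $5$-skeleton to normalize the $\mathbb{Z}_{12}$- and $\mathbb{Z}_2$-components of the top attaching map); nothing about matching characteristic classes makes the $k$-invariants coincide ``by standard obstruction theory.'' Your bundle-level alternative also misses the key point. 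Since $j_*\alpha$ generates $\pi_4(BF(4))\cong\pi_6(S^3)\cong\mathbb{Z}_{12}$, the bundle $6\alpha$ is \emph{not} fiber homotopy trivial over $S^4$, so ``the order of $J(\alpha)$'' alone would only give periodicity $12$, not $6$; and ``automorphisms of base and fiber'' cannot identify the total spaces, because $\xi_{k,0}$ and $\xi_{k+6,0}$ are non-isomorphic bundles --- what one actually gets is a fiber homotopy equivalence, not a bundle isomorphism. The paper's Proposition \ref{thm:9} supplies the missing step: $6j_*\alpha$ lies in $\operatorname{im}\eta_3^*=\ker\bigl(q^*:[S^4,BF(4)]\to[\mathbb{C}P^2,BF(4)]\bigr)$, i.e.\ the class dies precisely because $\mathbb{C}P^2=S^2\cup_\eta e^4$, and for the primed bundles (which are not pulled back from $S^4$) one additionally needs the action of $[S^4,BSO(4)]$ on $[\mathbb{C}P^2,BSO(4)]$ and the injectivity of $e_*:[\mathbb{C}P^2,BF(4)]\to[\mathbb{C}P^2,BF]$ to carry out the computation. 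Without these ingredients your proposal does not yield $k\equiv k'\bmod 6\Rightarrow M_{k,0}\simeq M_{k',0}$.
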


\begin{proof}
  \begin{enumerate}
    \item Easy calculation shows that $p_1(M_{k,0})=(4k+3)x^2$ and $p_1(M'_{k,0})=(4k+4)x^2$. Since Novikov proved that rational Pontryagin classes are homeomorphism invariants \cite{Nov65}, the first statement follows.
    \item The only if part follows from a theorem of Hirzebruch \cite[Theorem 4.5]{Hir95}, which states that the first Pontryagin class modulo torsion is a homotopy invariant mod $24$.

        The if part is due to the following stronger proposition (Proposition \ref{thm:9}).
  \end{enumerate}
\end{proof}

\begin{prop}\label{thm:9}
  The $S^3$-bundle $M_{k,0}\to \mathbb{C}P^2$ is fiber homotopy equivalent to $M_{k',0}\to \mathbb{C}P^2$ if $k\equiv k' \mod 6$. The statement also holds for $M_{k,0}'\to \mathbb{C}P^2$.
\end{prop}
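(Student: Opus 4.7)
The plan is to reduce the problem to a computation in $\pi_4(BSG(4))$. Since $\xi_{k,0} = q^*(k\alpha)$ by construction, the sphere bundle $M_{k,0}\to\mathbb{C}P^2$ is classified by the composition $\mathbb{C}P^2 \xrightarrow{q} S^4 \xrightarrow{J(k\alpha)} BSG(4)$, and two such bundles are fiber homotopy equivalent iff the classifying maps coincide in $[\mathbb{C}P^2, BSG(4)]$. Applying the second part of the theorem from Section~\ref{sec:1} to the cofiber sequence $S^3 \xrightarrow{\eta} S^2 \to \mathbb{C}P^2 \xrightarrow{q} S^4$ with target $Z = BSG(4)$, two classes $\gamma,\delta\in\pi_4(BSG(4))$ satisfy $q^*\gamma = q^*\delta$ iff $\gamma-\delta\in\im(\Sigma\eta)^*$, where $(\Sigma\eta)^*\colon\pi_3(BSG(4))\to\pi_4(BSG(4))$. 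So the task reduces to showing $(k-k')J(\alpha)\in\im(\Sigma\eta)^*$ whenever $k\equiv k'\pmod{6}$.

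From the evaluation fibration $\Omega^3_1 S^3 \to SG(4) \to S^3$, together with the vanishing of the relevant Whitehead products (since $S^3$ is an H-space), one obtains $\pi_3(SG(4))\cong\mathbb{Z}\oplus\pi_6(S^3) = \mathbb{Z}\oplus\mathbb{Z}_{12}\{a_3\}$ and $\pi_2(SG(4))\cong\pi_5(S^3)=\mathbb{Z}_2\{\eta_3\eta_4\}$. The $\mathbb{Z}$-summand is detected by the Euler number through the evaluation map, so since $e(\alpha)=0$, the image $J(\alpha)$ lies in the $\mathbb{Z}_{12}\{a_3\}$ summand. The map $(\Sigma\eta)^*$ corresponds to pullback of bundles along $\Sigma\eta$, which on the clutching-function side amounts to precomposition with $\eta$; under the identifications this sends $\eta_3\eta_4\mapsto\eta_3\eta_4\eta_5 = 6a_3$ using the paper's relation. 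Hence $\im(\Sigma\eta)^*=\{0,6a_3\}$, the unique order-$2$ subgroup of $\mathbb{Z}_{12}\{a_3\}$.

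For $k\equiv k'\pmod{6}$, write $k-k'=6m$, so $(k-k')J(\alpha) = m\cdot(6J(\alpha))$ lies in $6\mathbb{Z}_{12}\{a_3\} = \{0,6a_3\}=\im(\Sigma\eta)^*$, yielding the fiber homotopy equivalence in the spin case. For the nonspin case $M'_{k,0}$, the bundles $\xi'_{k,0}$ and $\xi'_{k',0}$ are built from the same $\mathbb{C}P^2$-summand $\gamma\oplus\epsilon^2$ but differ only in the $S^4$-summand ($k\alpha$ vs.\ $k'\alpha$) pulled back along $v\colon\mathbb{C}P^2\to\mathbb{C}P^2\vee S^4$; the difference of the classifying maps therefore comes entirely from the $S^4$-factor, so the same argument applies verbatim. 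The main obstacle is the bookkeeping: carefully verifying that $(\Sigma\eta)^*$ translates to composition with $\eta$ on the clutching-function side of the identification $\pi_n(BSG(4))\leftrightarrow\pi_{n-1}(SG(4))$, and that $J(\alpha)$ lands in the $\mathbb{Z}_{12}$-torsion summand via $e(\alpha)=0$. Once these identifications are nailed down, the conclusion follows entirely from the single homotopy-group relation $\eta_3\eta_4\eta_5=6a_3$ recorded in the paper.
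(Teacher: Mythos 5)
Your spin case is essentially sound and runs parallel to the paper's argument: where you use the evaluation fibration $SF(3)\to SG(4)\to S^3$ (split since $S^3$ is a group) and $e(\alpha)=0$ to place $J(\alpha)$ in the $\pi_6(S^3)\cong\mathbb{Z}_{12}$ summand, the paper instead reduces the structure group to $SO(3)\subset F(4)$ and works with based self-maps; in both versions the whole point is the relation $\eta_3\eta_4\eta_5=6a_3$, giving $\operatorname{im}(\Sigma\eta)^*=\{0,6a_3\}$, and the exactness statement ``$q^*\gamma=q^*\delta$ iff $\gamma-\delta\in\operatorname{im}(\Sigma\eta)^*$'' from Section \ref{sec:1} applies because $\xi_{k,0}$ is literally $q^*(k\alpha)$.

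The genuine gap is the nonspin case, which does not follow ``verbatim.'' For $\xi'_{k,0}$ the classifying map does not factor through $q:\mathbb{C}P^2\to S^4$; by construction it is the coaction-twisted class $(\xi'_{0,0})^{k\alpha}$, i.e.\ $(\text{class of }\gamma\oplus\epsilon^2,\,k\alpha)\circ v$ with $v$ the pinch map. There is no ``difference of classifying maps'' in $[\mathbb{C}P^2,BSG(4)]$: this is only a pointed set with an action of $\pi_4(BSG(4))$, since $BSG(4)$ (unlike $BSG$ or $BF$) is not an H-space. The exactness statement you invoke controls only the images $q^*\gamma$, i.e.\ the action on the base point class; it says nothing about when $u^{kJ\alpha}=u^{k'J\alpha}$ for the nontrivial class $u=J\xi'_{0,0}$, because the stabilizer of $u$ under such a coaction is in general a twisted (Barcus--Barratt/Rutter type) subgroup that need not contain $\ker q^*$. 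This is exactly why the paper's proof has a separate, longer argument here: it pushes everything into the infinite loop space $BF$, where the Proposition of Section \ref{sec:1} linearizes the action as $u^{\gamma}=u+q^*\gamma$, concludes $e_*j_*(\xi'_{k+6m,0})=e_*j_*(\xi'_{k,0})$, and then descends to the unstable statement via the injectivity of $e_*:[\mathbb{C}P^2,BF(4)]\to[\mathbb{C}P^2,BF]$, proved by obstruction theory using $\pi_i(F/F(4))=0$ for $i=2,4$ and $H^{1}(\mathbb{C}P^2)=H^{3}(\mathbb{C}P^2)=0$. Your proposal supplies neither the linearization in an H-space target nor the unstable injectivity (or any stabilizer computation), so the nonspin half of the proposition is not yet proved; note that stabilizing alone would only give stable fiber homotopy equivalence, which is weaker than what the proposition (and its later use in Lemma \ref{thm:6}) requires.
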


\begin{proof}
  The bundles $\xi_{k,0}$ and $\xi'_{k,0}$ can be viewed as elements in $[\mathbb{C}P^2, BSO(3)]$ since their structure group can be reduced to $SO(3)$. Therefore, the proposition will follow if we can show that the images of $\xi_{k,0}$ (resp. $\xi'_{k,0}$) and $\xi_{k',0}$ (resp. $\xi'_{k',0}$) under $j_*:[\mathbb{C}P^2, BSO(3)]\to[\mathbb{C}P^2, BF(4)]$ are the same if $k\equiv k' \mod 6$. Here $F(m)=\{f:S^{m-1}\to S^{m-1}| \deg f=1, f(*)=*\}$ with $*$ the base point and $j:BSO(3)\to BF(4)$ is the map induced by the inclusion $SO(3)\subset F(4)$.

  Our calculation relies on the following exact ladder of Puppe sequences:
  \[
  \begin{tikzcd}[row sep=small, column sep=tiny]
   & \mathbb{Z}\{\alpha\} \arrow[d,equal] & \{\xi_{k,0},\xi'_{k,0}\} \arrow[d,equal] & \mathbb{Z}_2\{\gamma'\} \arrow[d,equal] & \\
  0 \arrow[r] \arrow[dd] & {[S^4,BSO(3)]} \arrow[r,"q^*"] \arrow[dd,"j_*","\text{surj.}"'] & {[\mathbb{C}P^2,BSO(3)]} \arrow[r,"i^*"] \arrow[dd,"j_*"] & {[S^2,BSO(3)]} \arrow[r] \arrow[dd,"j_*","\cong"'] & 0 \arrow[dd] \\
   & & & & \\
  {[S^3,BF(4)]} \arrow[r,"\eta_3^*","\text{inj.}"'] \arrow[dd,"e_*","\cong"'] & {[S^4,BF(4)]} \arrow[r,"q^*"] \arrow[dd,"e_*","\text{inj.}"'] & {[\mathbb{C}P^2,BF(4)]} \arrow[r] \arrow[dd,"e_*","\text{inj.}"'] & {[S^2,BF(4)]} \arrow[r,"\eta_2^*=0"] \arrow[dd,"e_*","\cong"'] & {[S^3,BF(4)]} \arrow[dd,"e_*","\cong"'] \\
   & & & & \\
  {[S^3,BF]} \arrow[r,"\eta_3^*","\text{inj.}"'] & {[S^4,BF]} \arrow[r,"q^*"] & {[\mathbb{C}P^2,BF]} \arrow[r] & {[S^2,BF]} \arrow[r,"\eta_2^*=0"] & {[S^3,BF]}
  \end{tikzcd}
  \]
  We provide some explanation of this diagram.
  \begin{enumerate}
    \item The first row is exact since $\pi_3(BSO(3))=0$.
    \item It is well known that $j_*: \pi_i(BSO(3))\to \pi_i(BF(4))$ is an isomorphism for $i=2$ and surjective for $i=4$, where $\pi_2(BF(4))\cong \mathbb{Z}_2$ and $\pi_4(BF(4))\cong\pi_6(S^3)\cong\mathbb{Z}_{12}$.
    \item The vertical maps from the second row to the third row are induced by the inclusion-induced map $e:BF(4)\to BF$. Note that the third row is an exact sequence of abelian groups, as $BF$ is an infinite loop space \cite{BV68}. Since $e_*:\pi_i(BF(4))\to \pi_i(BF)$ coincides with the suspension map $\Sigma:\pi_{i+2}(S^3)\to\pi_{i-1}^s$, it follows that $e_*$ is an isomorphism for $i=2,3$ and injective for $i=4$.
    \item The reason for the injectivity of $e_*:[\mathbb{C}P^2,BF(4)]\to[\mathbb{C}P^2,BF]$ is as follows. Let $[g_1],[g_2]\in[\mathbb{C}P^2,BF(4)]$ such that $e_*[g_1]=e_*[g_2]$, i.e. $eg_1\simeq eg_2:\mathbb{C}P^2\to BF$. The obstructions of lifting the homotopy to $BF(4)$ lie in $H^i(\mathbb{C}P^2;\pi_i(F/F(4)))$, where $F/F(4)$ is the homotopy fiber of $e:BF(4)\to BF$. It is easy to see that $\pi_i(F/F(4))=0$ for $i=2,4$. Since $H^i(\mathbb{C}P^2;G)=0$ ($i=1,3$) for any coefficient $G$, there is no obstruction and we have $g_1\simeq g_2$. The injectivity of $e_*$ follows.
    \item Now it is clear that $\eta_2^*=0$ in the second and third row. As $\eta_3^*:\pi_3(BF(4))\to \pi_4(BF(4))$ coincides with $\eta_5^*:\pi_5(S^3) \to \pi_6(S^3)$, $\eta_3^*$ is injective and $\im \eta_3^*=\mathbb{Z}_2\{6j_*\alpha\}$. It is similar for the third row.
    \item The construction of $\xi_{k,0}$ and $\xi'_{k,0}$ implies that $\xi_{k,0}=kq^*\alpha$ and $\xi'_{k,0}={\xi'_{0,0}}^{k\alpha}$ (refer to Subsection \ref{sec:1} for the notations). In addition, $\xi'_{0,0}=\gamma\oplus\epsilon^2$.
  \end{enumerate}

  We now turn to the calculation. For any integer $m$,
  \[
  j_*(\xi_{k+6m,0})=j_*((k+6m)q^*\alpha)=(k+6m)q^*j_*\alpha=kq^*j_*\alpha,
  \]
  where the last equation holds since $6mj_*\alpha\in\im \eta_3^*=\ker q^*$. In addition, $kq^*j_*\alpha=j_*(k q^*\alpha)=j_*(\xi_{k,0})$.

  For $\xi'_{k,0}$, we have
  \begin{align*}
    e_*j_*(\xi'_{k+6m,0}) & =e_*j_*({\xi'_{0,0}}^{(k+6m)\alpha})=(e_*j_*(\xi'_{0,0}))^{(k+6m)e_*j_*\alpha} \\
     & =e_*j_*(\xi'_{0,0})+q^*((k+6m)e_*j_*\alpha) \\
     & =e_*j_*(\xi'_{0,0})+kq^*e_*j_*\alpha=e_*j_*(\xi'_{k,0}).
  \end{align*}
  It follows from the injectivity of $e_*$ that $j_*(\xi'_{k+6m,0})=j_*(\xi'_{k,0})$. It is now obvious that the proposition holds.
\end{proof}

\section{Proof of Theorem \ref{thm:10}}\label{sec:3}

\subsection{Classification up to homotopy equivalence}

Let $M$ be a closed, smooth, $1$-connected $7$-manifold with integral cohomology ring isomorphic to $H^*(\mathbb{C}P^2\times S^3)$, i.e. $H^*(M)\cong \mathbb{Z}[x,y]/(x^3,y^2)$ with $\deg x=2$ and $\deg y=3$. Then $M$ has a minimal cell structure up to homotopy equivalence (cf. \cite[Proposition 4C.1]{Hat02}):
\[
M\simeq S^2\cup_{\phi_3}e^3\cup_{\phi_4}e^4\cup_{\phi_5}e^5\cup_{\phi_7}e^7.
\]
Let $M^{(n)}$ be the $n$-skeleton of this minimal cell structure. Before analyzing the attaching maps, we make the following convention:

\begin{conv}
For $A\subset X$, the image of an element $a\in \pi_i(A)$ under the inclusion-induced homomorphism $\pi_i(A)\to\pi_i(X)$ will also be denoted by $a$.
\end{conv}

We now start the analysis of $\phi_i$.

  1. It follows from $H_2(M)\cong \mathbb{Z}$ that $\phi_3=0$, namely, $M^{(3)}=S^2\vee S^3$.

  2. We have $\pi_3(M^{(3)})\cong\pi_3(S^2\vee S^3)\cong\mathbb{Z}\{\eta_2\}\oplus\mathbb{Z}\{\iota_3\}$. As $H_3(M)\cong \mathbb{Z}$, the component of $\phi_4$ on $\iota_4$ must be $0$. Together with the fact that $x^2$ generates $H^4(M)\cong \mathbb{Z}$, we can see that $\phi_4=\pm\eta_2$. As attaching $e^4$ using $\eta_2$ or $-\eta_2$ will lead to homotopy equivalent complexes, we may assume that $\phi_4=\eta_2$. (Similar problems may occur below, and we will not explain again.) Therefore $M^{(4)}=\mathbb{C}P^2\vee S^3$.

  3. We have $\pi_4(M^{(4)})\cong\pi_4(\mathbb{C}P^2\vee S^3)\cong\mathbb{Z}\{[\iota_2,\iota_3]\}\oplus\mathbb{Z}_2\{\eta_3\}$, where $[\cdot,\cdot]$ denotes the Whitehead product. Since $xy$ generates $H^5(M)\cong \mathbb{Z}$, the coefficient of $\phi_5$ on $[\iota_2,\iota_3]$ must be $\pm 1$. Therefore we can assume $\phi_5=[\iota_2,\iota_3]+\delta\eta_3$, where $\delta=0$ or $1$.

      \begin{lem}\label{thm:1}
        Let $M$ and $\delta$ be as above. Then $w_2(M)=0$ if and only if $\delta=1$.
      \end{lem}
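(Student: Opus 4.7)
The plan is to detect $w_2(M)$ through Wu's formula. Since $M$ is orientable we have $w_1 = 0$ and hence $\nu_2 = w_2$; Wu's formula then asserts that $\mathrm{Sq}^2(u) = w_2 \smile u$ for every $u \in H^5(M;\mathbb{Z}_2)$. Because $H^2(M;\mathbb{Z}_2) = \mathbb{Z}_2\{x\}$, we may write $w_2 = c\cdot x$ for a unique $c \in \mathbb{Z}_2$, and $w_2$ vanishes iff $c = 0$. Taking $u = xy$ (the generator of $H^5(M;\mathbb{Z}_2)$), the relation becomes $\mathrm{Sq}^2(xy) = c\cdot x^2 y$. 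Since $x$ and $y$ lift to integral classes their Bocksteins vanish, so $\mathrm{Sq}^1(x) = \mathrm{Sq}^1(y) = 0$ and the Cartan formula collapses to $\mathrm{Sq}^2(xy) = x^2 y + x\cdot \mathrm{Sq}^2(y)$. Everything thus reduces to computing $\mathrm{Sq}^2(y) \in H^5(M;\mathbb{Z}_2)$.

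Attaching the $7$-cell does not affect cohomology below degree $6$, so $H^5(M;\mathbb{Z}_2) = H^5(M^{(5)};\mathbb{Z}_2)$ and it suffices to work in the $5$-skeleton. The idea is to pass to the quotient $p\colon M^{(5)} \to M^{(5)}/\mathbb{C}P^2$. Under the collapsing map $\mathbb{C}P^2 \vee S^3 \to S^3$ the Whitehead product $[\iota_2,\iota_3]$ dies (its $\iota_2$-factor lives in $\mathbb{C}P^2$), while $\eta_3$ is preserved, so $M^{(5)}/\mathbb{C}P^2 \simeq S^3 \cup_{\delta\eta_3} e^5$. This is $S^3 \vee S^5$ when $\delta = 0$ and $\Sigma\mathbb{C}P^2$ when $\delta = 1$. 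In the first case $\mathrm{Sq}^2$ from $H^3$ to $H^5$ vanishes for dimension reasons; in the second $\mathrm{Sq}^2\bar{y}$ generates $H^5$, which is the classical Hopf-invariant-one property of $\eta$.

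A short diagram chase in the cofiber long exact sequence of $\mathbb{C}P^2 \hookrightarrow M^{(5)} \to M^{(5)}/\mathbb{C}P^2$ shows that $p^*\colon H^5(M^{(5)}/\mathbb{C}P^2;\mathbb{Z}_2) \to H^5(M^{(5)};\mathbb{Z}_2)$ is an isomorphism, thanks to the fact that the restriction $H^4(M^{(5)};\mathbb{Z}_2) \to H^4(\mathbb{C}P^2;\mathbb{Z}_2)$ is an isomorphism (both groups are generated by $x^2$). By naturality, $\mathrm{Sq}^2(y) = p^*(\mathrm{Sq}^2\bar{y}) = \delta\cdot xy$ in $M$, so $\mathrm{Sq}^2(xy) = (1+\delta)x^2 y$ and $c = 1 + \delta \bmod 2$. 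Hence $w_2(M) = 0$ iff $\delta = 1$. The main technical point is the quotient-and-naturality step: one has to verify both that $[\iota_2,\iota_3]$ is killed after collapsing $\mathbb{C}P^2$ and that $p^*$ is an isomorphism on $H^5$. Everything else is a routine application of the Wu and Cartan formulas.
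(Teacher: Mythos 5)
Your argument is correct and follows essentially the same route as the paper: Wu's formula on $H^5(M;\mathbb{Z}_2)$ combined with the Cartan formula reduces the question to whether $Sq^2\colon H^3(M;\mathbb{Z}_2)\to H^5(M;\mathbb{Z}_2)$ vanishes, which is exactly how the paper detects $\delta$. The only difference is that you spell out, via collapsing $\mathbb{C}P^2$ in the $5$-skeleton and the naturality of $Sq^2$, the step the paper simply asserts (that $\delta=1$ is equivalent to $Sq^2\neq 0$ in that range), which is a welcome but not essentially different addition.
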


      \begin{proof}
        The condition $\delta=1$ is equivalent to $Sq^2\neq 0:H^3(M;\mathbb{Z}_2)\to H^5(M;\mathbb{Z}_2)$. Let $\overline{a}$ denote the mod $2$ reduction of $a\in H^*(M)$. Since $\overline{y}$ is the only nontrivial element in $H^3(M;\mathbb{Z}_2)\cong\mathbb{Z}_2$, it suffices to analyze if $Sq^2(\overline{y})$ is trivial or not. Our method is to compare $Sq^2:H^5(M;\mathbb{Z}_2)\to H^7(M;\mathbb{Z}_2)$ computed by two methods. On one hand, $Sq^2(\overline{xy})=w_2(M)\overline{xy}$ (cf. \cite{MS74}). On the other hand, $Sq^2(\overline{xy})=Sq^2(\overline{x})\overline{y}+Sq^1(\overline{x})Sq^1(\overline{y})+\overline{x}Sq^2(\overline{y})=\overline{x^2y}+\overline{x}Sq^2(\overline{y})$. Therefore, $Sq^2(\overline{y})=w_2(M)\overline{y}+\overline{xy}$, which indicates that $Sq^2(\overline{y})\neq 0$ if and only if $w_2(M)=0$. We have thus proved the lemma.
      \end{proof}

  4. The discussion of $\phi_7$ will be divided into two parts: the spin case and the nonspin case.

    (1) The spin case, i.e. $\delta=1$ and $M^{(5)}=(\mathbb{C}P^2\vee S^3)\cup_{[\iota_2,\iota_3]+\eta_3}e^5$.

        \begin{lem}\label{thm:12}
          If $M$ is spin, then $\pi_6(M^{(5)})\cong\mathbb{Z}\{\beta_1\}\oplus\mathbb{Z}_{12}\{a_3\}\oplus\mathbb{Z}_2\{p\eta_5\}$, where $\beta_1$ is the attaching map of the top cell of $M'_{0,0}$ equipped with the standard cell structure and $p:S^5\to \mathbb{C}P^2$ is the Hopf fibration.
        \end{lem}

        \begin{proof}
          Since $M^{(5)}\simeq (M'_{0,0})^{(5)}$, we only need to calculate $\pi_6((M'_{0,0})^{(5)})$.

          Consider the homotopy exact sequence of the pair $(M'_{0,0},(M'_{0,0})^{(5)})$:
          \[
          \cdots\to \pi_{i+1}(M'_{0,0},(M'_{0,0})^{(5)})\to\pi_i((M'_{0,0})^{(5)})\to\pi_i(M'_{0,0})\to\cdots.
          \]
          This sequence splits into short exact sequences:
          \[
          0\to \pi_{i+1}(M'_{0,0},(M'_{0,0})^{(5)})\to\pi_i((M'_{0,0})^{(5)})\to\pi_i(M'_{0,0})\to 0.
          \]
          To see this, first observe that there is an isomorphism $\sigma_*+i_*:\pi_i(\mathbb{C}P^2)\oplus\pi_i(S^3)\to\pi_i(M'_{0,0})$, where $\sigma$ is a section of $S(\xi'_{0,0})$ and $i$ is the inclusion of fiber. Since images of $\sigma$ and $i$ lie in $(M'_{0,0})^{(5)}$, it follows that $\sigma_*+i_*$ factors through $\pi_i((M'_{0,0})^{(5)})$, which gives a splitting map $\pi_i(M'_{0,0})\to \pi_i((M'_{0,0})^{(5)})$.
          It is now obvious that the lemma holds.
        \end{proof}

        To see the constraints on $\phi_7$, consider the $S^1$-bundle over $M$ with Euler class $x$: $S^1\to N\to M$. It is easy to show that $\pi_1(N)=1$ and $H_*(N)\cong H_*(S^3\times S^5)$, which implies that $N\simeq S^3\cup e^5\cup e^8$. Since $\pi_4(N)\cong \pi_4(M)\cong\pi_4(M'_{0,0})\cong\pi_4(\mathbb{C}P^2)\oplus\pi_4(S^3)\cong\mathbb{Z}_2$, we must have $N\simeq (S^3\vee S^5)\cup e^8$. It follows that $\pi_6(M)\cong\pi_6(N)\cong\pi_6(S^3\vee S^5)\cong\mathbb{Z}_{12}\oplus\mathbb{Z}_2$. Combining the fact that $\pi_6(M)\cong\pi_6(M^{(5)})/(\phi_7)$, we see that the coefficient of $\phi_7$ on $\beta_1$ must be $\pm 1$, which can always be chosen as $+1$.

    (2) The nonspin case, i.e $\delta=0$ and $M^{(5)}=(\mathbb{C}P^2\vee S^3)\cup_{[\iota_2,\iota_3]}e^5$.
        \begin{lem}
          If $M$ is nonspin, then $\pi_6(M^{(5)})\cong\mathbb{Z}\{\beta_0\} \oplus\mathbb{Z}_{12}\{a_3\}\oplus\mathbb{Z}_2\{p\eta_5\}$, where $\beta_0$ is the attaching map of the top cell of $\mathbb{C}P^2\times S^3$ equipped with the standard cell structure.
        \end{lem}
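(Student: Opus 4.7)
The plan is to mirror the spin case, using $\mathbb{C}P^2\times S^3$ in place of $M'_{0,0}$ as the model. The minimal CW structure on $\mathbb{C}P^2\times S^3$ has cells in dimensions $0,2,3,5,7$, and its $5$-skeleton is homotopy equivalent to $(\mathbb{C}P^2\vee S^3)\cup_{[\iota_2,\iota_3]}e^5$, which is precisely $M^{(5)}$ in the nonspin case. So it suffices to compute $\pi_6((\mathbb{C}P^2\times S^3)^{(5)})$.

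First I would identify $\pi_6(\mathbb{C}P^2\times S^3)\cong\pi_6(\mathbb{C}P^2)\oplus\pi_6(S^3)$ via the product splitting. The $S^3$-factor contributes $\mathbb{Z}_{12}\{a_3\}$, and from the principal bundle $S^1\to S^5\to\mathbb{C}P^2$ one reads off $\pi_6(\mathbb{C}P^2)\cong\pi_6(S^5)\cong\mathbb{Z}_2\{p\eta_5\}$, where $p:S^5\to\mathbb{C}P^2$ is the Hopf projection.

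Next I would run the long exact sequence of the pair $(\mathbb{C}P^2\times S^3,(\mathbb{C}P^2\times S^3)^{(5)})$. Only a single $7$-cell is attached above the $5$-skeleton, so by relative Hurewicz $\pi_7(\text{pair})\cong H_7(\text{pair})\cong\mathbb{Z}$, and $\partial$ of a generator is by construction the attaching map $\beta_0$. The inclusions $\mathbb{C}P^2\times\{*\}$ and $\{*\}\times S^3$ both factor through the $5$-skeleton, so the composite
\begin{equation*}
\pi_i(\mathbb{C}P^2)\oplus\pi_i(S^3)\longrightarrow \pi_i((\mathbb{C}P^2\times S^3)^{(5)})\longrightarrow \pi_i(\mathbb{C}P^2\times S^3)
\end{equation*}
is the standard isomorphism. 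In particular the right map is a split surjection for every $i$, which by exactness forces $\partial:\pi_7(\text{pair})\to\pi_6((\mathbb{C}P^2\times S^3)^{(5)})$ to be injective and reduces the long exact sequence to a split short exact sequence
\begin{equation*}
0\to\mathbb{Z}\{\beta_0\}\to \pi_6((\mathbb{C}P^2\times S^3)^{(5)})\to \mathbb{Z}_{12}\{a_3\}\oplus\mathbb{Z}_2\{p\eta_5\}\to 0,
\end{equation*}
which yields the claimed decomposition.

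There is no real obstacle here: the argument is essentially formal once one uses the product splitting, and is in fact simpler than the spin case (which required choosing a section of $S(\xi'_{0,0})\to\mathbb{C}P^2$). The only point worth double-checking is the identification of $\partial$ of the relative generator with $\beta_0$, which is the standard description of the boundary of the characteristic map of a top cell as its attaching map.
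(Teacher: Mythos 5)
Your argument is correct and is essentially the paper's own (omitted) proof: the paper treats the nonspin case exactly as the spin case with $\mathbb{C}P^2\times S^3=M_{0,0}$ in place of $M'_{0,0}$, the splitting of the long exact sequence of the pair coming from the section and fiber inclusion, which for the product are just your two factor inclusions. Only a cosmetic slip: the minimal cell structure of $\mathbb{C}P^2\times S^3$ has cells in dimensions $0,2,3,4,5,7$ (there is a $4$-cell since $H^4\cong\mathbb{Z}$), but this does not affect your identification of the $5$-skeleton or the rest of the argument.
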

        The proof of this lemma is analogous to that of Lemma \ref{thm:12} and so is omitted.

Summarizing what we have:

\begin{prop}
  Let $M$ be a closed, smooth, $1$-connected $7$-manifold with integral cohomology ring isomorphic to $H^*(\mathbb{C}P^2\times S^3)$. Then $$M\simeq (\mathbb{C}P^2\vee S^3)\cup_{[\iota_2,\iota_3]+\delta\eta_3}e^5\cup_{\beta_{\delta}+ka_3+\epsilon p\eta_5}e^7$$ for some $\delta, \epsilon\in \{0,1\}$ and $0\leq k\leq 11$, where $p$ is the Hopf fibration and $\beta_0, \beta_1$ is the attaching map of the top cell of $\mathbb{C}P^2\times S^3, M'_{0,0}$, respectively. Moreover, $\delta=0$ if and only if $M$ is nonspin.
\end{prop}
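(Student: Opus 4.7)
The proposition is essentially a synthesis of the cell-by-cell analysis already carried out in the subsection, so my plan is simply to assemble the pieces and then deal with the one remaining issue, namely that the coefficient of $\beta_\delta$ in $\phi_7$ must be $\pm 1$.

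First I would recall the already established skeletal picture: the minimal cell structure gives $M^{(3)}=S^2\vee S^3$, then $M^{(4)}=\mathbb{C}P^2\vee S^3$, and the analysis of $\phi_5\in\pi_4(\mathbb{C}P^2\vee S^3)$ forces $\phi_5=[\iota_2,\iota_3]+\delta\eta_3$ with $\delta\in\{0,1\}$. By Lemma \ref{thm:1} the value of $\delta$ is pinned down by the spin condition on $M$, giving the final clause of the statement. At this stage $M^{(5)}$ is determined up to homotopy by $\delta$, and the two preceding lemmas compute $\pi_6(M^{(5)})\cong\mathbb{Z}\{\beta_\delta\}\oplus\mathbb{Z}_{12}\{a_3\}\oplus\mathbb{Z}_2\{p\eta_5\}$ in both cases.

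The attaching map $\phi_7\in\pi_6(M^{(5)})$ can therefore be written as $\phi_7=n\beta_\delta+ka_3+\epsilon p\eta_5$ with $n\in\mathbb{Z}$, $k\in\mathbb{Z}/12$, $\epsilon\in\mathbb{Z}/2$. I would then argue, uniformly in $\delta$, that $n=\pm 1$. The argument sketched in the text for the spin case in fact works in both cases: form the $S^1$-bundle $S^1\to N\to M$ with Euler class $x$. Then $N$ is $1$-connected with $H_\ast(N)\cong H_\ast(S^3\times S^5)$, so $N\simeq S^3\cup e^5\cup e^8$, and since $\pi_4(N)\cong\pi_4(M)\cong\mathbb{Z}_2$ (comparison with the known model $M'_{0,0}$ or $\mathbb{C}P^2\times S^3$) the $5$-cell attaches trivially, giving $N\simeq(S^3\vee S^5)\cup e^8$. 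From the Gysin/homotopy sequence of the $S^1$-bundle, $\pi_6(M)\cong\pi_6(N)\cong\pi_6(S^3\vee S^5)\cong\mathbb{Z}_{12}\oplus\mathbb{Z}_2$, which is finite. Since $\pi_6(M)\cong\pi_6(M^{(5)})/\langle\phi_7\rangle$ and the infinite cyclic summand of $\pi_6(M^{(5)})$ must be killed, the coefficient $n$ has to be $\pm 1$.

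Finally, replacing the characteristic map of $e^7$ by its reflection (which yields a homotopy equivalent CW complex) allows me to assume $n=+1$, and reducing $k$ modulo $12$ and $\epsilon$ modulo $2$ puts $\phi_7$ in the stated normal form. I do not expect serious obstacles here; the main subtlety is verifying that the $S^1$-bundle argument goes through uniformly in $\delta$, which reduces to checking $\pi_4(M)\cong\mathbb{Z}_2$ in both cases and identifying $\pi_6(N)$ with $\pi_6(M)$, both of which are routine once the homotopy type of $N$ is in hand.
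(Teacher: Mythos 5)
Your proposal is correct and follows essentially the same route as the paper: assembling the skeletal analysis and using the $S^1$-bundle $N$ with Euler class $x$ to get $\pi_6(M)\cong\pi_6(N)\cong\mathbb{Z}_{12}\oplus\mathbb{Z}_2$, so that the coefficient of $\beta_\delta$ in $\phi_7$ is $\pm 1$ by comparing orders in $\pi_6(M^{(5)})/\langle\phi_7\rangle$ (note that merely ``killing the infinite cyclic summand'' gives only $n\neq 0$; you need the order $24$ identification, which you do record). The only cosmetic difference is that you run the $S^1$-bundle argument uniformly in $\delta$, whereas the paper writes it out in the spin case and treats the nonspin case as analogous.
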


We denote the right-hand side of the above formula as $X_{\delta,k,\epsilon}$ and $(\mathbb{C}P^2\vee S^3)\cup_{[\iota_2,\iota_3]+\delta\eta_3}e^5$ as $X_{\delta}$. It is easy to see that $X_{\delta,k,\epsilon}$ is a $1$-connected $7$-dimensional Poincar\'{e} complex with integral cohomology ring isomorphic to $H^*(\mathbb{C}P^2\times S^3)$. Conversely, any $1$-connected $7$-dimensional Poincar\'{e} complex whose integral cohomology ring is isomorphic to $H^*(\mathbb{C}P^2\times S^3)$ must be homotopy equivalent to some $X_{\delta,k,\epsilon}$.

\begin{lem}\label{thm:2}
  Two spaces $X_{\delta,k,\epsilon}$ and $X_{\delta,k',\epsilon'}$ are homotopy equivalent if and only if there exists $f\in \mathcal{E}(X_{\delta})$ such that $f_*(\beta_{\delta}+ka_3+\epsilon p\eta_5)=\pm(\beta_{\delta}+k'a_3+\epsilon' p\eta_5)$, where $\mathcal{E}(X)$ denotes the group of self homotopy equivalences of $X$.
\end{lem}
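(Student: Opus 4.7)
The plan is to reduce this to the standard principle that two CW-complexes $X \cup_\psi e^n$ and $X \cup_{\psi'} e^n$ obtained by attaching a top cell are homotopy equivalent if and only if some $f \in \mathcal{E}(X)$ sends $[\psi]$ to $\pm[\psi']$ in $\pi_{n-1}(X)$; the sign accounts for a degree $-1$ self-homeomorphism of the top cell.

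For the $(\Leftarrow)$ direction, suppose $f \in \mathcal{E}(X_\delta)$ satisfies $f_*\phi = \varepsilon\phi'$ with $\varepsilon \in \{\pm 1\}$, writing $\phi = \beta_\delta + ka_3 + \epsilon p\eta_5$ and $\phi' = \beta_\delta + k'a_3 + \epsilon' p\eta_5$. Since $\varepsilon\phi'$ is null-homotopic in $X_{\delta,k',\epsilon'}$, so is $f\circ\phi$, so $f$ extends to a cellular map $g:X_{\delta,k,\epsilon}\to X_{\delta,k',\epsilon'}$; using the homotopy $f\circ\phi \simeq \varepsilon\phi'$ in $X_\delta$ together with (when $\varepsilon=-1$) a linear degree $-1$ self-homeomorphism of $D^7$, the extension can be chosen to have degree $\varepsilon$ on the top cell. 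Since $f_*$ is an isomorphism on $H_i$ for $i\leq 5$ and $g_*$ is $\pm 1$ on $H_7$, $g$ is an isomorphism on all of $H_*$, hence a homotopy equivalence by Whitehead's theorem.

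For the $(\Rightarrow)$ direction, given a homotopy equivalence $g:X_{\delta,k,\epsilon}\to X_{\delta,k',\epsilon'}$, I would first apply cellular approximation to assume $g$ is cellular. Since $X_\delta$ is the $5$-skeleton of both spaces, $g$ restricts to a self-map $f := g|_{X_\delta}$. The inclusion $X_\delta\hookrightarrow X_{\delta,k,\epsilon}$ is an isomorphism on $H_i$ for $i\leq 5$ and $H_i(X_\delta)=0$ for $i\geq 6$, so $f_*$ is an isomorphism on all of $H_*(X_\delta)$, whence $f\in\mathcal{E}(X_\delta)$ by Whitehead. To read off the attaching map, view $g$ as a map of pairs $(X_{\delta,k,\epsilon},X_\delta)\to(X_{\delta,k',\epsilon'},X_\delta)$. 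The characteristic map gives $\pi_7(X_\delta\cup_\psi e^7,X_\delta)\cong\pi_7(D^7,S^6)\cong\mathbb{Z}$ with boundary sending $1\mapsto[\psi]$, and the quotient map by $X_\delta$ identifies this $\mathbb{Z}$ with $\pi_7(S^7)$; under these identifications, $g_*$ on relative $\pi_7$ is multiplication by $\deg\bar g$, where $\bar g:S^7\to S^7$ is the map induced on quotients, and $\deg\bar g=\pm 1$ by a five-lemma comparison on $H_7$. Naturality of the boundary $\partial$ against $g_*$ then yields $f_*[\phi]=\pm[\phi']$, as desired.

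The main obstacle is the $(\Rightarrow)$ direction: extracting the $\pm 1$ sign rigorously requires identifying relative $\pi_7$ via the characteristic map and invoking naturality of $\partial$, together with the observation that a self-homotopy equivalence of a $1$-connected Poincar\'e complex restricts (after cellular approximation) to an equivalence of the lower skeleton. Once these identifications are set up, the conclusion is a short diagram chase.
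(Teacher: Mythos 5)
Your proposal is correct and follows essentially the same route as the paper: restricting a cellular homotopy equivalence to the $5$-skeleton and using naturality of $\partial\colon\pi_7(X_{\delta,k,\epsilon},X_\delta)\cong\mathbb{Z}\to\pi_6(X_\delta)$ for the forward direction, and extending $f$ over the top cell and applying Whitehead's theorem for the converse. Your treatment of the sign via the degree of the induced map on the top cell/quotient sphere is just a slightly more explicit version of the paper's diagram chase.
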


\begin{proof}
  \lq\lq$\Rightarrow$" Let $g:X_{\delta,k,\epsilon}\to X_{\delta,k',\epsilon'}$ be a homotopy equivalence. We may assume that $g$ is cellular. Let $f$ be the restriction of $g$ on the $5$-skeleton. As $f$ induces isomorphisms on $H_*(X_{\delta})$, it follows that $f\in \mathcal{E}(X_{\delta})$. We have the following commutative diagram:
  \[
  \begin{tikzcd}
  \mathbb{Z}\cong\pi_7(X_{\delta,k,\epsilon},X_{\delta}) \arrow[r, "\partial"] \arrow[d, "g_*","\cong"'] & \pi_6(X_{\delta}) \arrow[d,"f_*","\cong"'] \\
  \mathbb{Z}\cong\pi_7(X_{\delta,k',\epsilon'},X_{\delta}) \arrow[r,"\partial"] & \pi_6(X_{\delta})
  \end{tikzcd}
  \]
  Since $\partial$ sends $1$ to the attaching maps, the \lq\lq only if\rq\rq\ part follows immediately.

  \lq\lq$\Leftarrow$" The condition implies that $f$ can be extended to $g:X_{\delta,k,\epsilon}\to X_{\delta,k',\epsilon'}$. It can be easily checked that $g$ induces an isomorphism between the cohomology rings, which implies that $g$ is a homotopy equivalence.
\end{proof}

\begin{prop}
  Two spaces $X_{1,k,\epsilon}$ and $X_{1,k',\epsilon'}$ are homotopy equivalent if and only if $k\equiv k' \mod 6$.
\end{prop}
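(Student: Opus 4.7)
By Lemma~\ref{thm:2}, it suffices to classify the orbits of the set $\{\beta_1 + ka_3 + \epsilon p\eta_5 : k \in \mathbb{Z}_{12},\ \epsilon \in \mathbb{Z}_2\} \subset \pi_6(X_1)$ under the action of $\mathcal{E}(X_1) \times \{\pm 1\}$. The plan treats the two directions separately, using Proposition~\ref{thm:9} on the existence side and a structural analysis of $\mathcal{E}(X_1)$ on the obstruction side.

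For the ``if'' direction (when $k \equiv k' \pmod 6$), I would invoke Proposition~\ref{thm:9}: the fiber homotopy equivalence $M'_{k,0} \simeq M'_{k+6,0}$ over $\mathbb{C}P^2$ can be made cellular, and its restriction to the common $5$-skeleton is a self-equivalence of $X_1$; comparing the two top-cell attaching maps via Lemma~\ref{thm:2} then realizes the shift of $k$ by $6$. To also get independence of $\epsilon$, I would exhibit an explicit self-equivalence of $X_1$ realizing a shift by $p\eta_5$. A natural candidate is the map that is the identity on $\mathbb{C}P^2$ and sends $\iota_3 \mapsto \iota_3 + n\eta_2$; it extends over the $5$-cell because $[\iota_2,\eta_2] \in \pi_4(\mathbb{C}P^2) = 0$, and a computation using bilinearity of Whitehead products together with the relations $\eta_3\eta_4\eta_5 = 6a_3$ and $\Sigma^2 a_3 = 2\nu_5$ should pin down its effect on $\beta_1$.

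For the ``only if'' direction, I would bound the action of $\mathcal{E}(X_1)$ on $\pi_6(X_1)$. Any $f \in \mathcal{E}(X_1)$ acts by $\pm 1$ on each of $H^2(X_1)$ and $H^3(X_1)$ (forced by the generator $xy \in H^5$), and on the $4$-skeleton $\mathbb{C}P^2 \vee S^3$ it is therefore, up to the $\iota_3$-modifications above, just the combination of these two sign changes. Using the cofibrations $\mathbb{C}P^2 \hookrightarrow X_1 \to X_1/\mathbb{C}P^2 \simeq \Sigma \mathbb{C}P^2$ and $S^3 \hookrightarrow X_1 \to X_1/S^3 \simeq \mathbb{C}P^2 \vee S^5$ to constrain $f_*\beta_1$, I would show that the $\beta_1$-coefficient of $f_*\beta_1$ is $\pm 1$ and the $a_3$-coefficient is a multiple of $6$. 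Combined with the sign freedom from Lemma~\ref{thm:2}, this leaves exactly $\mathbb{Z}_6$ orbits, parameterized by $k \bmod 6$.

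The main obstacle is the explicit computation of $f_*\beta_1$ for each basic self-equivalence $f$: identifying $\beta_1$ precisely enough (through its description as the top-cell attaching map of $M'_{0,0}$ coming from the bundle $S^3 \to M'_{0,0} \to \mathbb{C}P^2$) and tracking Whitehead and composition corrections carefully enough to conclude that only multiples of $6a_3$ and $p\eta_5$ can appear as modifications.
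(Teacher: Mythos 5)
There are genuine gaps in both directions. For the ``if'' direction, Proposition~\ref{thm:9} only tells you that $M'_{k,0}\simeq M'_{k+6,0}$; to turn this into $X_{1,k,\epsilon}\simeq X_{1,k+6,\epsilon'}$ you would have to know \emph{which} pair of parameters the manifold $M'_{k,0}$ realizes, i.e.\ identify its top-cell attaching map in the basis $\{\beta_1,a_3,p\eta_5\}$. That identification is established nowhere (and it cannot be assumed to be ``bundle parameter $=$ cell parameter''); worse, at this stage one does not even know that every $X_{1,k,\epsilon}$ is realized by some bundle -- that is a \emph{consequence} of the proposition -- so your route only yields equivalences between complexes with unknown parameters, not the statement for arbitrary $(k,\epsilon)$. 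Your second ingredient, the self-map fixing $\mathbb{C}P^2$ and sending $\iota_3\mapsto\iota_3+n\eta_2$, is vacuous on the $4$-skeleton: since $\pi_3(\mathbb{C}P^2)=0$, the class $\eta_2$ dies in $\pi_3(\mathbb{C}P^2\vee S^3)$, so this map is homotopic to the identity there. All the freedom lies in the choice of extension over the $5$-cell, which is exactly the coaction construction the paper uses: $f_{l,c}=\id^{lp+c\eta_3\eta_4}$ built from $\psi:X_1\to X_1\vee S^5$. The actual content of the ``if'' part is the computation ${f_{l,c}}_*\beta_1=\beta_1+lp\eta_5+6ca_3$, which rests on the claim $\psi\beta_1=\beta_1\pm[\iota_2,\iota_5]$ (proved via a $Sq^2$ argument and the cup-product structure of $(X_1\vee S^5)\cup_{\psi\beta_1}e^7$) together with $[\iota_2,p]=p\eta_5$ and $[\iota_2,\eta_3\eta_4]=\eta_3\eta_4\eta_5=6a_3$; none of this is carried out, or even flagged, in your plan.

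For the ``only if'' direction, the step you defer (``I would show that \dots the $a_3$-coefficient is a multiple of $6$'') is precisely the hard part: it requires controlling the effect on $\pi_6(X_1)$ of \emph{every} self-equivalence, including sign-changing ones and all coaction modifications, and the cofibration constraints you list do not obviously deliver the divisibility by $6$. The paper avoids this analysis entirely by a counting argument: the ``if'' part shows there are at most $6$ homotopy types among the $X_{1,k,\epsilon}$, while Theorem~\ref{thm:3}(2) (Hirzebruch's theorem that $p_1$ mod $24$ is a homotopy invariant) exhibits at least $6$ distinct homotopy types among $M'_{0,0},\dots,M'_{5,0}$, each of which is some $X_{1,k,\epsilon}$; hence there are exactly $6$ and the ``only if'' follows. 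Note the division of labor is the reverse of yours -- the bundle/manifold input (Theorem~\ref{thm:3}) is used for the ``only if'', and the ``if'' is purely homotopy-theoretic. If you want to keep your structure, you must either supply the attaching-map identification for $M'_{k,0}$ and the full $\mathcal{E}(X_1)$-orbit computation, or switch to the coaction-plus-counting scheme.
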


\begin{proof}
  Consider the following map:
  \[
  f_{l,c}:=\id^{lp+c\eta_3\eta_4}: X_1\xrightarrow{\psi} X_1\vee S^5 \xrightarrow{(\id, lp+c\eta_3\eta_4)} X_1.
  \]
  As $f_{l,c}$ induces an isomorphism on $H_*(X_1)$, it follows that $f_{l,c}\in \mathcal{E}(X_1)$. We analyze the effect of ${f_{l,c}}_*$ on $\pi_6(X_1)$.

    We have
    \[
    f_{l,c}\beta_1: S^6 \xrightarrow{\beta_1} X_1 \xrightarrow{\psi} X_1\vee S^5 \xrightarrow{(\id, lp+c\eta_3\eta_4)} X_1.
    \]
    Using the split short exact sequence (cf. \cite[Theorem XI.1.5]{Whi78})
    \[
    0\to \pi_7(X_1\times S^5,X_1\vee S^5) \to \pi_6(X_1\vee S^5) \to \pi_6(X_1\times S^5) \to 0,
    \]
    it is easy to see that $\pi_6(X_1\vee S^5)\cong \pi_6(X_1)\oplus\mathbb{Z}_2\{\eta_5\}\oplus\mathbb{Z}\{[\iota_2,\iota_5]\}$.
    \begin{clm}
    $\psi\beta_1=\beta_1\pm [\iota_2,\iota_5]$.
    \end{clm}

    \begin{proof}
      Suppose $\psi\beta_1=\alpha+a\eta_5+b[\iota_2,\iota_5]$, where $\alpha\in\pi_6(X_1)$. Then $\alpha=(\id, 0)\psi\beta_1=\beta_1$ ($0$ denotes the constant map):
      \[
      \begin{tikzcd}
      \alpha : S^6 \arrow[r,"\beta_1"] & X_1 \arrow[r,"\psi"]  \arrow[rr, bend right, "\simeq\id"] & X_1\vee S^5 \arrow[r,"{(\id,0)}"] & X_1 .
      \end{tikzcd}
      \]

     Similarly, we have $a\eta_5=(0,\id)\psi\beta_1=q\beta_1$, where $q:X_1\to S^5$ is the map collapsing the $4$-skeleton of $X_1$ to a point. It follows that $X_{1,k,\epsilon}/X_{1,k,\epsilon}^{(4)}\simeq S^5\cup_{a\eta_5}e^7$. Similar argument as in the proof of Lemma \ref{thm:1} shows that $Sq^2=0:H^5(X_{1,k,\epsilon};\mathbb{Z}_2)\to H^7(X_{1,k,\epsilon};\mathbb{Z}_2)$, which implies that $Sq^2=0:H^5(X_{1,k,\epsilon}/X_{1,k,\epsilon}^{(4)};\mathbb{Z}_2)\to H^7(X_{1,k,\epsilon}/X_{1,k,\epsilon}^{(4)};\mathbb{Z}_2)$. Therefore $a\eta_5=0$.

     Now let $X'=(X_1\vee S^5)\cup_{\psi\beta_1}e^7$. Then $\psi$ can be extended to a map $\Psi:X_{1,0,0}\to X'$. With the help of $\Psi$, one can deduce the cup product structure of $X'$ from $X_{1,0,0}$. It is not hard to obtain that $H^*(X')\cong\mathbb{Z}[x,y,s]/(x^3,y^2,s^2,xs-x^2y,ys)$, which forces $b$ to be $\pm 1$.
    \end{proof}

    Therefore,
    \[
    \begin{split}
       f_{l,c}\beta_1 & = (\id,lp+c\eta_3\eta_4)(\beta_1\pm [\iota_2,\iota_5]) \\
         & =\beta_1\pm[\iota_2,lp+c\eta_3\eta_4] \\
         & =\beta_1+l[\iota_2,p]+c[\iota_2,\eta_3\eta_4]
    \end{split}
    \]
    By \cite[p. 818]{BJS60}, we have $[\iota_2,p]=p\eta_5$. To calculate $[\iota_2,\eta_3\eta_4]$, recall the following formula:
    \begin{lem}[{cf. \cite[Theorem 8.18]{Whi78}}]
      Let $\alpha\in\pi_{p+1}(Y)$, $\beta\in\pi_{q+1}(Y)$, $\gamma\in\pi_m(S^p)$, and $\delta\in\pi_n(S^q)$. Then
      \[
      [\alpha\circ\Sigma\gamma,\beta\circ\Sigma\delta]=[\alpha,\beta]\circ\Sigma(\gamma\wedge\delta).
      \]
    \end{lem}
    Thus $[\iota_2,\eta_3\eta_4]=[\iota_2,\iota_3]\eta_4\eta_5$. Since $[\iota_2,\iota_3]=\eta_3\in\pi_4(X_1)$, it follows that $[\iota_2,\eta_3\eta_4]=\eta_3\eta_4\eta_5=6a_3$. Summarizing, we have
    \[
    {f_{l,c}}_*\beta_1=\beta_1+lp\eta_5+6ca_3.
    \]
    It is clear that ${f_{l,c}}_*a_3=a_3$ and ${f_{l,c}}_*p\eta_5=p\eta_5$, as $\mathbb{C}P^2$ and $S^3$ are fixed by $f_{l,c}$. Therefore,
  \[
  {f_{l,c}}_*(\beta_1+ka_3+\epsilon p\eta_5)=\beta_1+(6c+k)a_3+(l+\epsilon)p\eta_5.
  \]
  Combined with Lemma \ref{thm:2}, the \lq\lq if\rq\rq\ part follows.

  Since Theorem \ref{thm:3} implies that there are at least $6$ different $X_{1,k,\epsilon}$ up to homotopy equivalence, the \lq\lq only if\rq\rq\ part is also clear. This completes the proof of the proposition.
\end{proof}

The classification theorem now follows directly.

\begin{thm}\label{thm:4}
  Let $M$ be a closed, smooth, spin, $1$-connected $7$-manifold with integral cohomology ring isomorphic to $H^*(\mathbb{C}P^2\times S^3)$. Then $M$ is homotopy equivalent to some $M'_{k,0}$ (notation as in Section \ref{sec:2}). Moreover, its homotopy type is uniquely determined by $p_1(M) \mod 24$.
\end{thm}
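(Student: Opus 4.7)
The plan is to combine the preceding proposition with the results of Section \ref{sec:2} in a short bookkeeping argument. Since $M$ is spin, Lemma \ref{thm:1} forces $\delta = 1$, and the summary proposition preceding this theorem gives $M \simeq X_{1, k, \epsilon}$ for some $k \in \{0, 1, \ldots, 11\}$ and $\epsilon \in \{0, 1\}$. The proposition just established tells me that $X_{1,k,\epsilon} \simeq X_{1,k',\epsilon'}$ if and only if $k \equiv k' \mod 6$, so up to homotopy equivalence $M$ is determined by a single residue class in $\mathbb{Z}/6$, and the parameter $\epsilon$ becomes irrelevant.

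Next I match these six homotopy classes with the family $M'_{k,0}$ from Section \ref{sec:2}. Each $M'_{k,0}$ is itself a closed, smooth, spin, $1$-connected $7$-manifold with the required cohomology ring, hence $M'_{k,0} \simeq X_{1, k_0, \epsilon_0}$ for some pair $(k_0,\epsilon_0)$ depending on $k$. By Theorem \ref{thm:3}, the six manifolds $M'_{0,0}, \ldots, M'_{5,0}$ are pairwise non-homotopy-equivalent, so they realize all six residue classes in $\mathbb{Z}/6$. Therefore $M \simeq M'_{k,0}$ for some $k$.

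For the assertion about $p_1(M) \mod 24$: Hirzebruch's theorem (already cited in the proof of Theorem \ref{thm:3}) makes $p_1 \mod 24$ a homotopy invariant of $M$ once one identifies $H^4(M) \cong \mathbb{Z}$ via $x^2$. From Section \ref{sec:2} we have $p_1(M'_{k,0}) = (4k+4)\,x^2$, and as $k$ ranges over $\{0,1,\ldots,5\}$ the residue $(4k+4) \mod 24$ takes the six distinct values $\{0, 4, 8, 12, 16, 20\}$. Combined with the previous paragraph, this shows that $p_1(M) \mod 24$ indexes the six homotopy types bijectively.

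I do not anticipate a real obstacle here: the topological heavy lifting was done in the preceding proposition (and in Section \ref{sec:2}), so Theorem \ref{thm:4} reduces to combining those inputs with Hirzebruch's mod $24$ invariant. The one point worth double-checking is that the six-fold ambiguity in the preceding proposition matches the six-fold ambiguity of $M'_{k,0}$ from Theorem \ref{thm:3}, but this is immediate since both classifications are parameterized by $\mathbb{Z}/6$ and $M'_{k,0}$ is of the form $X_{1,*,*}$.
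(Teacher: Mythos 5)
Your proposal is correct and follows essentially the same route the paper intends (the paper simply states that the theorem ``follows directly'' from the spin case $\delta=1$ of Lemma \ref{thm:1}, the normal form $M\simeq X_{1,k,\epsilon}$, the proposition that $X_{1,k,\epsilon}$ depends only on $k \bmod 6$, Theorem \ref{thm:3}, and Hirzebruch's mod $24$ invariance). Your pigeonhole matching of the six classes $M'_{0,0},\dots,M'_{5,0}$ with the six types $X_{1,k,\epsilon}$ is just a slightly more explicit packaging of this bookkeeping --- note the paper already has $M'_{0,0}\simeq X_{1,0,0}$ by the very definition of $\beta_1$ --- so there is nothing to correct.
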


\begin{rmk}
  For the nonspin case, we have only obtained a classification result of Poincar\'{e} complexes. We can prove that $X_{0,k,\epsilon}\simeq X_{0,k',\epsilon'}$ if and only if $k\equiv k' \mod 6$ and $\epsilon=\epsilon'$. Since the proof is rather complicated and not relevant to the rest of the paper, we shall not include it here. We know that $X_{0,k,0}$ is homotopy equivalent to some $S^3$-bundle over $\mathbb{C}P^2$ and so is smoothable. We guess that $X_{0,k,1}$ is not homotopy equivalent to any topological manifold.
\end{rmk}

\subsection{Classification up to homeomorphism, PL-homeomorphism and connected sum with homotopy spheres}

\begin{thm}\label{thm:5}
  Let $M$ be a closed, smooth, spin, $1$-connected $7$-manifold with integral cohomology ring isomorphic to $H^*(\mathbb{C}P^2\times S^3)$. Then $M$ is PL-homeomorphic to some $M'_{k,0}$, and its homeomorphism and PL-homeomorphism type are uniquely determined by $p_1(M)$. Furthermore, there exists a homotopy $7$-sphere $\Sigma^7$ such that $M$ is diffeomorphic to $M'_{k,0}\#\Sigma^7$.
\end{thm}

We use surgery theory to do the classification. The crucial part is the following lemma which is similar to \cite [Lemma 5.3] {CE03}:

\begin{lem}\label{thm:6}
  Let $f_m: M'_{k+6m,0}\to M'_{k,0}$ be a fiber homotopy equivalence. Then
  $$
  \eta([f_m])=m\in[M'_{k,0},G/\pl]\cong\mathbb{Z}.
  $$
\end{lem}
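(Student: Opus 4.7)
The plan is to compute $\eta([f_m])$ by analyzing the underlying stable $\pl$-bundle of the normal invariant and extracting its first Pontryagin class, then comparing against a generator of $[M'_{k,0}, G/\pl] \cong \mathbb{Z}$ that is detected by $p_1/24$.

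First I will use the formula $\eta([f_m]) = (\nu_{M'_{k,0}} - (f_m^{-1})^*\nu_{M'_{k+6m,0}}, t(f_m))$ recalled in the surgery preliminaries. By Proposition \ref{thm:9}, $f_m$ may be realized as a fiber homotopy equivalence covering $\id_{\mathbb{C}P^2}$, so $(f_m^{-1})^*$ acts as the identity on the generator $x^2 \in H^4(M'_{k,0}; \mathbb{Z})$. Stably $\tau_{M'_{j,0}} \cong \pi^*(\tau_{\mathbb{C}P^2} \oplus \xi'_{j,0})$, which combined with $p_1(\xi'_{j,0}) = (4j+1)x^2$ and $p_1(\mathbb{C}P^2) = 3x^2$ yields $p_1(M'_{j,0}) = (4j+4)x^2$; hence the stable $\pl$-bundle component of $\eta([f_m])$ has first Pontryagin class
\[
-p_1(M'_{k,0}) + (f_m^{-1})^* p_1(M'_{k+6m,0}) = 24m \cdot x^2.
\]
Moreover, since both normal bundles $\nu_{M'_{j,0}}$ are stably pulled back from $\mathbb{C}P^2$ along $\pi$, so is their difference, and therefore $\eta([f_m])$ lies in the image of $\pi^* : [\mathbb{C}P^2, G/\pl] \to [M'_{k,0}, G/\pl]$.

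Next I will identify $[M'_{k,0}, G/\pl] \cong \mathbb{Z}$ via $p_1/24$. Among $\pi_i(G/\pl)$ for $i \leq 7$, only $\pi_4 \cong \mathbb{Z}$ is infinite, so obstruction theory over the cell structure of $M'_{k,0}$ identifies the free rank of $[M'_{k,0}, G/\pl]$ as $1$, detected through $G/\pl \to B\pl \xrightarrow{p_1} K(\mathbb{Z},4)$. The generator of this $\mathbb{Z}$ corresponds to a stable $\pl$-bundle with $p_1 = 24 \cdot x^2$, because $\pi_4(G/\pl) \to \pi_4(B\pl) \cong \mathbb{Z}$ has image $24\mathbb{Z}$, reflecting $|\im(J\colon \pi_3(SO) \to \pi_3^s)| = 24$. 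Combined with the Pontryagin class computation above, this forces $\eta([f_m]) = m$.

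The main technical point I expect to address carefully is justifying the isomorphism $[M'_{k,0}, G/\pl] \cong \mathbb{Z}$: one must rule out or absorb the torsion contributions from $\pi_2$ and $\pi_6$ of $G/\pl$, and here the fact that $\eta([f_m])$ is pulled back from $\mathbb{C}P^2$ along $\pi$ allows me to restrict the whole computation to $[\mathbb{C}P^2, G/\pl]$, where the calculation is much cleaner. The $p_1 = 24$ normalization of the generator also needs to be pinned down with the correct sign, in analogy with \cite[Lemma 5.3]{CE03}.
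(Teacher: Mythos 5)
Your Pontryagin-class computation is the same one the paper makes (there it appears as $\omega_k^*j_*(\eta([F_m]))=24m$ after extending $f_m$ over the disk bundle $W_k=D(\xi'_{k,0})$; working directly on the closed manifold with $\eta(f)=(\nu_M-(f^{-1})^*\nu_N,t(f))$ is a legitimate shortcut). But the two statements you treat as routine are exactly the substance of the lemma, and your justifications for them do not hold up. First, the inference ``both normal bundles are pulled back from $\mathbb{C}P^2$, hence $\eta([f_m])\in\im\pi^*$'' is a non sequitur: a class in $[M'_{k,0},G/\pl]$ is a bundle \emph{together with} a fiber homotopy trivialization, and it is not determined by, nor does it inherit a $\pi^*$-factorization from, its underlying $B\pl$-class. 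Moreover, even if you grant this factorization, it does not compute the group $[M'_{k,0},G/\pl]$ itself, which is part of the assertion to be proved (and is needed later: one must know every normal invariant of $M'_{k,0}$ is realized by some $f_m$, so ``free rank $1$ plus unresolved $2$-torsion from $\pi_2(G/\pl)$'' is not enough). The paper disposes of this by a genuinely different ingredient you are missing: since $G/\pl$ is an infinite loop space, one can suspend, split $\Sigma^N M'_{k,0}\simeq \Sigma^N\mathbb{C}P^2\vee(\Sigma^{N+1}\mathbb{C}P^2\cup e^{N+7})$ (the Whitehead-product attaching data dies under suspension), and check $[\Sigma^{N+1}\mathbb{C}P^2\cup e^{N+7},X]=0$; this shows restriction along a section (equivalently $\pi_k^*$) is an isomorphism $[M'_{k,0},G/\pl]\cong[\mathbb{C}P^2,G/\pl]\cong\mathbb{Z}$.

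Second, your normalization of the generator is asserted for an insufficient reason, and this is precisely where a factor of $2$ can go wrong. The fact that $\pi_4(G/\pl)\to\pi_4(B\pl)$ has image of index $24$ says that the $S^4$-generator of $[S^4,G/\pl]$ maps to a bundle with $p_1=\pm48$ (the generator of $\pi_4(B\pl)$ has $p_1=\pm2$). The generator of $[\mathbb{C}P^2,G/\pl]\cong\mathbb{Z}$ is \emph{not} pulled back from $S^4$: it is ``half'' of $q^*$ of the $S^4$-generator, because the extension $0\to[S^4,G/\pl]\to[\mathbb{C}P^2,G/\pl]\to[S^2,G/\pl]\to0$ is nontrivial. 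Only after establishing this (the paper's Claim, proved by the Puppe-sequence ladder comparing $G/\pl$ with $B\pl$ and the fibration $G/\pl\to B\pl\to BG$) does one get that $j_*:[\mathbb{C}P^2,G/\pl]\to[\mathbb{C}P^2,B\pl]$ is multiplication by $24$, i.e.\ that the generator has underlying bundle with $p_1=\pm24$ rather than $\pm48$. Note that the ``torsion contribution from $\pi_2(G/\pl)$'' you propose to ``rule out or absorb'' and the $24$-versus-$48$ normalization are one and the same issue, and the final answer ($\eta([f_m])=m$ rather than $2m$, or a non-integral value) hinges on it; so this point cannot be left as an analogy with \cite[Lemma 5.3]{CE03} but must be proved, as the paper does.
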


\begin{proof}
  The lemma contains two parts. One is $[M'_{k,0},G/\pl]\cong\mathbb{Z}$ and the other is $\eta([f_m])=m$. We prove them one by one.

  It is known that $[\mathbb{C}P^2,G/\pl]\cong\mathbb{Z}$ (cf. \cite[Lemma 14C.1]{Wal69}). Let $\sigma_k:\mathbb{C}P^2\to M'_{k,0}$ be a cross section. If we can prove that $\sigma_k^*:[M'_{k,0},G/\pl]\to [\mathbb{C}P^2,G/\pl]$ is an isomorphism, then the first part of the lemma follows.

  It has been proved in \cite{BV68} that $G/PL$ is an infinite loop space. Suppose $G/\pl\simeq \Omega^N X$ with $N$ sufficiently large. Then
  \[
  [M'_{k,0},G/\pl] \cong [M'_{k,0},\Omega^N X] \cong [\Sigma^N M'_{k,0},X].
  \]
  As shown in Section \ref{sec:3}, we have $M'_{k,0}\simeq (\mathbb{C}P^2\vee S^3)\cup_{[\iota_2,\iota_3]+\eta_3}e^5\cup e^7$. Therefore,
  \[
  \begin{split}
    \Sigma^N M'_{k,0} & \simeq (\Sigma^N \mathbb{C}P^2\vee \Sigma^{N+1}\mathbb{C}P^2)\cup e^{N+7} \\
      & \simeq \Sigma^N \mathbb{C}P^2\vee (\Sigma^{N+1}\mathbb{C}P^2\cup e^{N+7}) \rel \Sigma^N\mathbb{C}P^2,
  \end{split}
  \]
  as the suspension of a Whitehead product is always $0$ and $\pi_{N+6}(\Sigma^N\mathbb{C}P^2)=0$ (cf. \cite{Muk82}). Thus we have the following commutative diagram:
  \[
  \begin{tikzcd}[column sep=tiny]
  {[M'_{k,0},G/\pl]} \arrow[r,"\cong"] \arrow[d,"\sigma_k^*"] & {[\Sigma^N \mathbb{C}P^2\vee (\Sigma^{N+1}\mathbb{C}P^2\cup e^{N+7}),X]} \arrow[r,"\cong"] \arrow[d,"(\Sigma^N\sigma_k)^*"] & {[\Sigma^N\mathbb{C}P^2,X]}\times {[\Sigma^{N+1}\mathbb{C}P^2\cup e^{N+7},X]} \arrow[dl,"p_1"] \\
  {[\mathbb{C}P^2,G/\pl]} \arrow[r,"\cong"] & {[\Sigma^N\mathbb{C}P^2,X]} &
  \end{tikzcd}
  \]
  One can easily show that $[\Sigma^{N+1}\mathbb{C}P^2\cup e^{N+7},X]=0$ with the help of the Puppe exact sequence and the fact that $\pi_{2i+1}(G/\pl)=0$. Then it is clear that $\sigma_k^*$ is an isomorphism. Note that if we denote the bundle projection of $\xi'_{k,0}$ as $\pi_k$, then $\pi_k^*:[\mathbb{C}P^2,G/\pl]\to [M'_{k,0},G/\pl]$ is also an isomorphism.

  Next, we turn to prove the second part of the lemma. Let $W_k=D(\xi'_{k,0})$, which is the total space of the associated disk bundle of $\xi'_{k,0}$. Then $\partial W_k=M'_{k,0}$. Let $i_k:M'_{k,0}\to W_k$ be the inclusion. Note that $W_k$ can also be regarded as the mapping cylinder of $\pi_k: M'_{k,0}\to\mathbb{C}P^2$. Therefore $f_m$ can be extended to a fiber homotopy equivalence of pairs:
  \[
  (F_m,f_m):(W_{k+6m},M'_{k+6m,0})\xrightarrow{\simeq} (W_k,M'_{k,0}).
  \]
  We have the following commutative diagram:
  \[
  \begin{tikzcd}
  \mathscr{S}^{\pl}(W_k) \arrow[r,"\eta"] \arrow[d,"i_k^*"] & {[W_k,G/\pl]} \arrow[d,"i_k^*"] & {[\mathbb{C}P^2,G/\pl]} \arrow[l,"\pi_k^*"',"\cong"] \arrow[ld,"\pi_k^*","\cong"'] \\
  \mathscr{S}^{\pl}(M'_{k,0}) \arrow[r,"\eta"] &{[M'_{k,0},G/\pl]}
  \end{tikzcd}
  \]
  Therefore,
  \[
  \eta([f_m])=\eta i_k^*([F_m])=i_k^*\eta([F_m]).
  \]
  As we have already seen that $i_k^*:[W_k,G/\pl]\to[M'_{k,0},G/\pl]$ is an isomorphism, we only need to prove that $\eta([F_m])=m\in[W_k,G/\pl]\cong\mathbb{Z}$.

  We use the following commutative diagram:
  \[
  \begin{tikzcd}
  {[W_k,G/\pl]} \arrow[r,"j_*"] \arrow[d,"\omega_k^*","\cong"'] & {[W_k,B\pl]} \arrow[d,"\omega_k^*","\cong"'] \\
  {[\mathbb{C}P^2,G/\pl]} \arrow[r,"j_*"] & {[\mathbb{C}P^2,B\pl]}
  \end{tikzcd}
  \]
  Here $\omega_k:\mathbb{C}P^2\to W_k$ is the zero-section and $j:G/\pl\to B\pl$ is the obvious map.
  \begin{clm}
  $[\mathbb{C}P^2,B\pl]\cong\mathbb{Z}$ and $j_*$ is multiplication by $24$.
  \end{clm}

  \begin{proof}
  The statement $[\mathbb{C}P^2,B\pl]\cong\mathbb{Z}$ follows from the $6$-connectedness of $\pl/O$ and the well-known fact that $[\mathbb{C}P^2,BO] \cong \mathbb{Z}$ (1-1 correspondence by $p_1$). The statement for $j_*$ follows by the following exact ladder of Puppe sequences:
  \[
  \begin{tikzcd}[row sep=small]
   & \mathbb{Z} \arrow[d, equal] & \mathbb{Z} \arrow[d,equal] & \mathbb{Z}_2 \arrow[d,equal] & \\
  0 \arrow[r] & {[S^4,G/\pl]} \arrow[r,"\times 2"] \arrow[dd,"j_*","\times 24"'] & {[\mathbb{C}P^2,G/\pl]} \arrow[r] \arrow[dd,"j_*"] & {[S^2,G/\pl]} \arrow[r] \arrow[dd,"j_*"] & 0 \\
  & & & & \\
  0 \arrow[r] & {[S^4,B\pl]} \arrow[r,"\times 2"] \arrow[d,equal] & {[\mathbb{C}P^2,B\pl]} \arrow[r] \arrow[d,equal] & {[S^2,B\pl]} \arrow[r] \arrow[d,equal] & 0 \\
   & \mathbb{Z} & \mathbb{Z} & \mathbb{Z}_2 &
  \end{tikzcd}
  \]
  Note that the left vertical arrow can be easily calculated by the exact sequence of homotopy groups of the fibration $G/\pl\to B\pl\to BG$.
  \end{proof}

  Now we have
  \[
  \begin{split}
     j_*(\eta([F_m])) & = \nu(W_k)-{F_k^{-1}}^*(\nu(W_{k+6m})) \\
       & = -TW_k+{F_k^{-1}}^*(TW_{k+6m}) \\
       & = -(\pi_k^*T\mathbb{C}P^2+\pi_k^*\xi'_{k,0})+ {F_k^{-1}}^*(\pi_{k+6m}^*T\mathbb{C}P^2+\pi_{k+6m}^*\xi'_{k+6m,0}) \\
       & = -(\pi_k^*T\mathbb{C}P^2+\pi_k^*\xi'_{k,0}) +(\pi_k^*T\mathbb{C}P^2+\pi_k^*\xi'_{k+6m,0}) \\
       & = \pi_k^*\xi'_{k+6m,0}-\pi_k^*\xi'_{k,0}
  \end{split}
  \]
  Therefore,
  \[
  \begin{split}
     \omega_k^*j_*(\eta([F_m])) & =\omega_k^*(\pi_k^*\xi'_{k+6m,0}-\pi_k^*\xi'_{k,0}) \\
       & =\xi'_{k+6m,0}-\xi'_{k,0} \\
       & =4(k+6m)+1-(4k+1) \\
       & =24m
  \end{split}
  \]
  Then $\eta([F_m])=m$ follows. The proof of the lemma is now complete.
\end{proof}

\begin{proof}[Proof of theorem \ref{thm:5}]
  As already shown in Theorem \ref{thm:4}, $M$ is homotopy equivalent to some $M'_{k,0}$. The first part of the theorem now follows immediately from Lemma \ref{thm:6}, Theorem \ref{thm:3}, and the injectivity of $\eta$.

  For the second part, we use the Puppe sequence
  \[
  [S^7,\pl/O]\xrightarrow{q^*} [M'_{k,0},\pl/O]\to [{M'_{k,0}}^{(5)},\pl/O].
  \]
  Since $\pl/O$ is $6$-connected, we have $[{M'_{k,0}}^{(5)},\pl/O]=0$. Note that $q^*$ coincides with the map $\Theta_7\to\mathcal{S}^{\pl/O}(M'_{k,0})$ defined by $\Sigma^7\mapsto M'_{k,0}\#\Sigma^7$. Therefore, if $M$ is a closed, spin, $1$-connected $7$-manifold with integral cohomology ring isomorphic to $H^*(\mathbb{C}P^2\times S^3)$, then there exists some homotopy $7$-sphere $\Sigma^7$ such that $M$ is diffeomorphic to $M'_{k,0}\# \Sigma^7$. This completes our proof.
\end{proof}

Combining Theorem \ref{thm:4} and Theorem \ref{thm:5}, Theorem \ref{thm:10} follows.

\section{Certain 7-manifolds with metrics of positive Ricci curvature} \label{sec:6}

In this section, we focus on the following:

\theoremstyle{plain}
\newtheorem*{numlessthm1}{Theorem \ref{thm:7}}

\begin{numlessthm1}
Let $M$ be a closed, smooth, spin, $1$-connected $7$-manifold with integral cohomology ring isomorphic to $ H^*(\mathbb{C}P^2\times S^3)$ or $H^*(S^2\times S^5)$. Then $M$ admits a Riemannian metric with positive Ricci curvature.
\end{numlessthm1}

The proof relies on the surgery theorem of Wraith. To state it, we first recall some conventions on plumbing. Plumbing can be represented by a labelled graph. For each bundle, we use a labelled vertex to represent it. If two bundles are plumbed together, we connect the corresponding vertices with an edge. Readers who are not familiar with the notion of plumbing can find a great introduction in \cite{CW17}, and we will adopt their terminologies.

Wraith's surgery theorem can be stated as follows:

\begin{thm}[\cite{Wra97}]\label{thm:11}
  Let $T$ be a labelled tree with one vertex a linear $D^n$-bundle over an $n$-dimensional Ricci positive manifold and others linear $D^n$-bundles over $S^n$. Then the boundary of the plumbing manifold admits a metric with positive Ricci curvature.
\end{thm}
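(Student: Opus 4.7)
The plan is to induct on the number of vertices of $T$. For the base case of a single vertex the plumbing manifold is just the disk bundle $D(\xi) \to P$ over the Ricci positive base, and its boundary is the sphere bundle $S(\xi)$. Using any Euclidean connection on $\xi$, form the canonical Riemannian submersion metric $\pi^{*} g_{P} + \varepsilon^{2} g_{\mathrm{fiber}}$ on $S(\xi)$ with round fibers of radius $\varepsilon$. The O'Neill formulas show that for $\varepsilon$ sufficiently small the intrinsic Ricci curvature of the fiber $S^{n-1}$ (positive when $n \geq 3$) and the horizontal Ricci inherited from $P$ dominate any contributions from the $A$ and $T$ tensors, yielding a Ricci positive metric on $S(\xi)$. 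This is the classical Nash--Poor construction.

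For the inductive step, let $T$ have $k+1$ vertices and pick a leaf $v$ corresponding to a linear $D^{n}$-bundle $\eta \to S^{n}$ plumbed to the subtree $T'$ at a point $q$. Topologically, the boundary of the plumbing over $T$ is obtained from $N := \partial(\text{plumbing over } T')$ by an $(n{-}1)$-surgery along a fiber sphere $S^{n-1} \subset N$ (the fiber of the relevant bundle over $q$), with framing determined by the clutching map of $\eta$: remove a tubular neighborhood $S^{n-1} \times D^{n}$ of the fiber and glue in $D^{n} \times S^{n-1}$ (the restriction of $S(\eta)$ to the complementary hemisphere of $S^{n}$), attached along $S^{n-1} \times S^{n-1}$ via the swap coming from the plumbing identification. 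By induction $N$ carries a Ricci positive metric, so the theorem reduces to a surgery lemma: this specific $(n{-}1)$-surgery can be carried out while preserving positive Ricci curvature.

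To prove the surgery lemma, I deform the ambient metric near the fiber sphere into a doubly warped product model $ds^{2} + f(s)^{2} g_{S^{n-1}_{\mathrm{base}}} + h(s)^{2} g_{S^{n-1}_{\mathrm{fiber}}}$ on a collar of the excised $S^{n-1} \times D^{n}$, and extend across the new handle by a doubly warped metric with the roles of $f$ and $h$ exchanged (reflecting the swap $D^{n} \times S^{n-1} \leftrightarrow S^{n-1} \times D^{n}$). The warping profiles must satisfy $f(0) = 0$, $f'(0) = 1$ on the inserted cap and interpolate smoothly to the ambient connection metric at the far end. Ricci positivity of a doubly warped product reduces to a system of inequalities involving $-f''/f$, $-h''/h$, $(1-(f')^{2})/f^{2}$, $(1-(h')^{2})/h^{2}$, and the cross term $-f'h'/(fh)$; these can be arranged by choosing $f$ concave and $h$ convex on disjoint subintervals, with all second derivatives small in the transition region.

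The main obstacle is precisely this pointwise interpolation. Unlike the scalar curvature surgery of Gromov--Lawson, where a single positivity inequality suffices and negative contributions can be averaged away, preserving Ricci positivity demands that \emph{all} Ricci eigenvalues stay positive throughout the bending region. This forces simultaneous control of $f''$, $h''$, and the mixed term $f'h'$, leaving very little slack. I would follow Wraith's explicit piecewise construction of the profiles, verify the Ricci inequalities pointwise on each piece, and smooth at the junctions by a standard mollification. Once the surgery lemma is in hand, the induction closes immediately and the theorem follows.
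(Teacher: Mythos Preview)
The paper does not prove this theorem; it is quoted from \cite{Wra97}, with the accompanying Remark that Wraith's original argument (stated there for trees all of whose vertices are bundles over spheres) goes through verbatim when one base is replaced by a general Ricci positive manifold. So there is no in-paper proof to compare against---the author is simply invoking Wraith's result as a black box.

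As for your sketch itself: the overall architecture (induction on vertices, base case via Nash--Poor, inductive step as an $(n{-}1)$-surgery along a fiber sphere) is the right picture, and is indeed how Wraith proceeds. But your inductive hypothesis is too weak to close the induction. You assume only that $N=\partial(\text{plumbing over }T')$ carries \emph{some} Ricci positive metric, and then propose to ``deform the ambient metric near the fiber sphere into a doubly warped product model.'' That deformation step is exactly where the difficulty lives: an arbitrary Ricci positive metric cannot in general be pushed into such a standard local form while keeping all Ricci eigenvalues positive---there is no Ricci analogue of the Gromov--Lawson local bending. Wraith's actual argument avoids this by carrying a much stronger inductive statement: the metric on $N$ is constructed from the outset so that near every fiber sphere it already has a prescribed warped-product (``core'') structure with quantitatively controlled principal curvatures, and the surgery lemma is proved only for metrics of that special form. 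The induction then propagates this extra structure, not merely the bare Ricci positivity. Your proposal would need to strengthen the hypothesis in this way and verify that the glued-in handle again has the required core geometry at its new fiber spheres; without that, the step ``deform \ldots\ into a doubly warped product model'' is a genuine gap.
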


\begin{rmk}
  In \cite{Wra97}, Wraith proved the above theorem where the base spaces of the bundles are all
  spheres. However, for the case where one bundle has a base space with positive Ricci curvature, Wraith informs us that the proof of the theorem still applies.
\end{rmk}

To simplify our problem, we also need a useful lemma:

\begin{lem}[\cite{CW17}]\label{thm:8}
  Let $W_1$, $W_2$ be two manifolds obtained by tree-like plumbing with $D^n$-bundles over $n$-manifolds. Connect $W_1$ and $W_2$ by the following plumbing diagram to obtain $W$:
  \begin{center}
    \begin{tikzpicture}[p/.style={circle, draw, fill, inner sep=0pt, minimum size=1mm}, i/.style={draw}, o/.style={circle, fill=white, inner sep=0pt}]
    \path (0,0) node[o] (a1) {}
    (0.5,0.5) node[o] (a2) {}
    (0.5,-0.5) node[o] (a3) {}
    (1,0) node[p, label=below:$A$] (a) {}
    (2,0) node[p, label=above:$O_1$] (o1) {}
    (2,-1) node[p, label=left:$O_2$] (o2) {}
    (3,0) node[p,label=below:$B$] (b) {}
    (3.5,0.5) node[o] (b2) {}
    (3.5,-0.5) node[o] (b3) {}
    (4,0) node[o] (b1) {};

    \draw (a) -- (o1) -- (b);
    \draw (o1) -- (o2);
    \node[i, fit=(a1)(a2)(a3)(a), label=left:$W_1$] {};
    \node[i, fit=(b1)(b2)(b3)(b), label=right:$W_2$] {};
    \end{tikzpicture}
  \end{center}
  Here $O_1$, $O_2$ are trivial $D^n$-bundles over $S^n$, i.e. $S^n\times D^n$, each box contains a tree producing $W_i$, and $A$, $B$ are arbitrary points in the trees. Then $\partial W\cong \partial W_1\#\partial W_2$.
\end{lem}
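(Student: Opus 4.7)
The plan is to prove this by directly computing $\partial W$ via the standard plumbing--boundary formula, and then exhibiting an explicit diffeomorphism with $\partial W_1 \# \partial W_2$.

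First I would establish the following formula for the boundary of a plumbing: for two $D^n$-bundles $E_i \to B_i$ plumbed via the swap $(b,f) \mapsto (f,b)$ on disks $D_i \subset B_i$,
\[
\partial(E_1 \cup_P E_2) \;=\; \bigl(\partial E_1 \setminus (D_1 \times S^{n-1})\bigr) \;\cup_{\sigma}\; \bigl(\partial E_2 \setminus (D_2 \times S^{n-1})\bigr),
\]
where $\sigma$ is the swap on the common corner $S^{n-1} \times S^{n-1}$. Iterating this over the tree in the statement, $\partial W$ decomposes as a union of four pieces: the two boundary pieces $\partial W_i \setminus (D^n \times S^{n-1})$ for $i=1,2$; a ``trinion-times-fiber'' piece $T \times S^{n-1}$ coming from $\partial O_1$, where $T$ is $S^n$ with three disjoint open $n$-disks removed; and a piece $D^n \times S^{n-1}$ coming from $\partial O_2$. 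All four pieces are glued along their $S^{n-1} \times S^{n-1}$ corners by swaps.

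The main work is then to analyze the ``connector'' $C := (T \times S^{n-1}) \cup_{\sigma} (D^n \times S^{n-1})$, viewed as a cobordism with two remaining free boundary components $S^{n-1}\times S^{n-1}$, and to show that $C$ is diffeomorphic, rel its free boundary, to a ``standard tube'' which, under the two remaining swap gluings against $\partial W_1 \setminus (D^n \times S^{n-1})$ and $\partial W_2 \setminus (D^n \times S^{n-1})$, reproduces exactly the identifications making up $\partial W_1 \# \partial W_2$. The triviality of $O_2$ is crucial: the extra $n$-handle supplied by $O_2$ is meant to cancel the twist coming from the ``$O_2$-slot'' plumbing disk in $O_1$, effectively converting that swap into an identity gluing and leaving the other two swaps free to act as the standard connect-sum identification along $S^{2n-2}$.

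I expect this identification of $C$ to be the main obstacle. The swap on $S^{n-1}\times S^{n-1}$ does not extend to a diffeomorphism of a single solid torus $D^n \times S^{n-1}$ (e.g.\ in the $n=2$ case it exchanges meridian and longitude), so the cancellation must genuinely use the combined structure of $O_1$ and $O_2$ together. The cleanest way to carry it out is via handle calculus: interpret each plumbing with a trivial $S^n \times D^n$ as the attachment of an unknotted, trivially-framed $n$-handle, and then perform an explicit cancellation between the $n$-handle of $O_2$ and one of the sub-handles created by the $O_1$ plumbing. Once this cancellation is in place, the remaining two swap gluings amount to removing a $D^{2n-1}$ from each of $\partial W_1$ and $\partial W_2$ and gluing along the $S^{2n-2}$ equator, which is precisely $\partial W_1 \# \partial W_2$.
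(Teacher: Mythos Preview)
The paper does not prove this lemma at all: it is stated with a citation to \cite{CW17} and used as a black box in the proof of Theorem~\ref{thm:7}. So there is no ``paper's own proof'' to compare your proposal against.

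That said, your outline is a reasonable route to an independent proof, and the decomposition of $\partial W$ into the four pieces you list is correct. One point deserves tightening. You speak of ``an explicit cancellation between the $n$-handle of $O_2$ and one of the sub-handles created by the $O_1$ plumbing'', but both of these are $n$-handles on a $2n$-manifold, so this is not handle cancellation in the usual adjacent-index sense; as stated the mechanism is unclear. A cleaner way to organise the same idea is to first observe, exactly as you computed for a single edge, that $\partial(O_1 \Box O_2)\cong S^{2n-1}$. The two remaining plumbing slots on $O_1$ then sit in $\partial(O_1\Box O_2)$ as two parallel copies of $D^n\times S^{n-1}$, tubular neighbourhoods of two parallel unknotted fibres $S^{n-1}\subset S^{2n-1}$. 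An isotopy pushes these into the two hemispheres of a decomposition $S^{2n-1}=D^{2n-1}_+\cup_{S^{2n-2}}D^{2n-1}_-$, and then each swap gluing $(\partial W_i\setminus D^n\times S^{n-1})\cup_\sigma(D^{2n-1}_\pm\setminus D^n\times S^{n-1})$ is readily identified with $\partial W_i\setminus D^{2n-1}$, giving the connected sum directly. This replaces the vague ``cancellation'' step with an explicit isotopy in $S^{2n-1}$ and avoids any appeal to handle theory beyond the boundary formula you already stated.
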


\begin{proof}[Proof of theorem \ref{thm:7}]
  Our strategy is to prove that $M$ is the boundary of some tree-like plumbing of a $D^4$-bundle over $\mathbb{C}P^2$ and several $D^4$-bundles over $S^4$. By Lemma \ref{thm:8}, we only need to prove it up to connecting sum with a homotopy $7$-sphere.

  The case of $H^*(M)\cong H^*(\mathbb{C}P^2\times S^3)$ is an immediate consequence of Theorem \ref{thm:5}.

  For the case $H^*(M)\cong H^*(S^2\times S^5)$, we first prove the following:
  \begin{clm}
  There are at most $24$ different $M$ up to homeomorphism, and at most $24\times 28$ up to diffeomorphism, where the $28$ comes from connecting sum with $28$ homotopy $7$-spheres.
  \end{clm}

  \begin{proof}
    First, we observe that $M\cong S^2\times D^5\cup_f S^2\times D^5$ with $f\in \diff(S^2\times S^4)$. To see this, notice that the generator of $H_2(M)\cong\mathbb{Z}$ can be represented by an embedding $h_1:S^2\hookrightarrow M$. The condition that $M$ is spin implies that the normal bundle of $h_1$ is trivial, which indicates that $h_1$ can extend to an embedding $h_1: S^2\times D^5\hookrightarrow M$. It is easy to see that $M-h_1(\Int (S^2\times D^5))$ is 1-connected, spin, and has homology groups isomorphic to those of $S^2$. Thus the generator of its second homology group can also be represented by an embedding $h_2:S^2\times D^5\hookrightarrow M-h_1(\Int (S^2\times D^5))$. Now notice that the complement of $h_1$ and $h_2$ in $M$ is an h-cobordism, and the observation follows by applying the h-cobordism theorem.

    Let $r_n:S^n\to S^n$ be a degree $-1$ map on $S^n$. As gluing two copies of $S^2\times D^5$ using $(r_2\times \id_{S^4})f$ or $(\id_{S^2}\times r_4)f$ gives diffeomorphic manifold as using $f$, we can always assume that $f$ induces identity on homology. Also, notice that the diffeomorphism type of $M$ only depends on the pseudo-isotopy class of $f$. Recall that $f_0,f_1\in \diff(N)$ are said to be pseudo-isotopic if there exists $F\in\diff(N\times I)$ such that $F(x,0)=(f_0(x),0)$ and $F(x,1)=(f_1(x),1)$ for all $x\in N$. Therefore, if we denote by $\widetilde{\pi}_0(\sdiff N)$ the set of pseudo-isotopy classes of diffeomorphisms of $N$ which induce identity on homology, then the number of different diffeomorphism types of such $M$ is at most the order of $\widetilde{\pi}_0(\sdiff S^2\times S^4)$. By \cite[Theorem II]{Sat69}, we have $\widetilde{\pi}_0(\sdiff S^2\times S^4)\cong FC_4^3\oplus\Theta_7$, where $FC_4^3$ is the group of concordance classes of framed embeddings of $S^4$ into $S^7$ and has order $24$ \cite{Hae66}. Hence the claim follows.
  \end{proof}

  Now our result will follow if we can construct $24$ nonhomeomorphic $M$. Notations as in Section \ref{sec:2}. Let $W_{k,l,m}=D(\xi'_{m,0})\Box D(k\alpha+l\beta)$, where $\Box$ is the symbol for plumbing. Then $M_{k,l,m}=\partial W_{k,l,m}$ is a $1$-connected spin $7$-manifold with $H^*(M_{k,l,m})\cong H^*(S^2\times S^5)$. Easy calculation shows that the Kreck-Stolz invariants \cite{KS88} of $M_{k,l,m}$ are
  \begin{align*}
    s_1(M_{k,l,m}) & = \frac{1}{56} (m+1)(2k-2l-lm), \\
    s_2(M_{k,l,m}) & = \frac{1}{12} (lm+l-k), \\
    s_3(M_{k,l,m}) & = \frac{1}{6} (2lm-2k-l).
  \end{align*}
  Let $a=lm+l-k$ and replace $k$ by $lm+l-a$ in the results, we have
  \begin{align*}
    s_1(M_{k,l,m}) & = \frac{1}{56} (m+1)(lm-2a), \\
    s_2(M_{k,l,m}) & = \frac{1}{12} a, \\
    s_3(M_{k,l,m}) & = \frac{1}{3} a-\frac{1}{2} l.
  \end{align*}
  Since $s_2,s_3$ are homeomorphism invariants and $a,l,m$ can be arbitrary integers, our result follows.
\end{proof}

\section*{Acknowledgements}

The author would like to thank Yang Su and Yi Jiang for many helpful discussions and David Wraith's helpful communications about his surgery theorem. The referee is also gratefully acknowledged for pointing out a mistake in an earlier version of the paper and providing detailed comments and suggestions, which significantly improved the presentation of the paper.

\bibliography{bib.bib}

\end{document}